\documentclass[11pt]{article}
\usepackage{amsmath,amsthm,amsfonts,amssymb,amscd, amsxtra, mathrsfs}
\usepackage{url}
\usepackage[margin=2.7 cm,nohead]{geometry}
\usepackage{color}
\usepackage{float}
\usepackage{cuted}
\newtheorem{algorithm}{Algorithm}

\usepackage{graphicx}
\usepackage{subcaption}
\usepackage{caption}
\usepackage{pdflscape}
\usepackage{geometry}

 \usepackage{enumerate}
\usepackage{booktabs}

\usepackage{algorithm}
\usepackage{algpseudocode}
 
\newtheorem{theorem}{Theorem}
\newtheorem{lemma}[theorem]{Lemma}
\newtheorem{definition}{Definition}
\newtheorem{corollary}[theorem]{Corollary}
\newtheorem{proposition}[theorem]{Proposition}

\newtheorem{remark}{Remark}

\usepackage[margin=2.7 cm,nohead]{geometry}
\usepackage{color}
\DeclareMathOperator{\grad}{grad}

\usepackage{enumitem}

\DeclareMathOperator{\Exp}{Exp}
\DeclareMathOperator{\Log}{Log}

\usepackage{amsmath} % ya deberías tenerlo

\begin{document}
%%%%%%%%%%%%%%%%%%%%%%%%%%%%%%%%%%%%%%%%%%%%%%%%%%%%%%%%%%%%%
\title{Projected Gradient Method on Hadamard Manifolds}
\author{
 O. P. Ferreira\thanks{Institute of Mathematics and Statistics, Federal University of Goias, Avenida Esperan\c{c}a, s/n, Campus II,  Goi\^ania, GO - 74690-900, Brazil (E-mail: {\tt orizon@ufg.br}, {\tt maxlng@ufg.br}, {\tt amunozg@discente.ufg.br}, {\tt mauriciolouzeiro@ufg.br}). The works of M. L. N. Gon\c calves was supported in part by CNPq (Grant No.  403197/2025-2), FAPEG (Grant No. 202510267001610) and FAPESC (Grant No. 2024TR002238).}   \and
M. L. N. Gon\c calves \footnotemark[1]
\and
A. M. Gonz\'alez \footnotemark[1]
\and
M. S. Louzeiro  \footnotemark[1]
\and
}

\maketitle

%\vspace{.1cm}
\noindent

\noindent
{\bf Abstract:}
We study constrained smooth optimization problems on Hadamard manifolds with closed geodesically convex feasible sets. We analyze two projected gradient schemes: one with a constant stepsize and another with a backtracking line search. The constant-stepsize scheme is analyzed under the assumption that the objective function has a Lipschitz continuous Riemannian gradient, whereas the backtracking variant does not require this assumption to establish stationarity of accumulation points. For both schemes, we prove that every accumulation point of the generated sequence is first-order stationary under the respective assumptions, without requiring compactness of the feasible set; compactness is needed only to ensure the existence of accumulation points. When the objective function has a Lipschitz continuous Riemannian gradient, we derive iteration-complexity bounds of order \(O(1/\sqrt{N})\) for projection-based stationarity measures for both schemes, together with the corresponding \(\varepsilon\)-complexity estimates. For the backtracking scheme, the complexity analysis additionally requires the trial line-search stepsizes to be uniformly bounded away from zero. Under the same respective assumptions, the generated sequences are also asymptotically regular.   Finally, we illustrate the practical performance of the methods by solving constrained Karcher mean problems on the manifold of symmetric positive definite matrices.\\\\
\noindent 
{\bf Keywords:} Hadamard manifolds; projected gradient method; metric projection; convex set.

%%%%%%%%%%%%%%%%%%%%%%%%%%%%%%%%%%%%%%%
\section{Introduction}

We consider the constrained smooth optimization problem
\begin{equation}\label{eq:intro-prob1}
\min\{\, f(p):\ p\in C\,\},
\end{equation}
where $\mathbb M$ is a Hadamard manifold, $C\subset \mathbb M$ is a closed geodesically convex set, and $f:\mathbb M\to\mathbb R$ is continuously differentiable. When $f$ is geodesically convex on $C$, problem~\eqref{eq:intro-prob1} falls within the scope of convex optimization in nonpositively curved spaces \cite{Udriste1994,Bacak2014}. Optimization problems of the form \eqref{eq:intro-prob1} have recently attracted considerable attention and have been studied using a variety of first-order algorithms; see, for example, \cite{WeberSra2022,louzeiro2022projected,bergmann2025projection,ren2026riemannian,Sra2016first,deng2025decentralized}.

First-order methods on Riemannian manifolds are now well established for unconstrained optimization; see, for example, \cite{Absil2008,Boumal2020} and the references therein. The constrained setting is substantially more challenging because feasibility must be preserved throughout the iterations, while the behavior of projection-type operators depends strongly on the underlying geometry. A fundamental advantage of Hadamard manifolds is that the exponential map is globally defined and every pair of points is connected by a unique minimizing geodesic. Consequently, logarithm maps and geodesic segments are available globally. Moreover, if $C$ is closed and geodesically convex, then the metric projection ${\cal P}_C$ is single-valued and satisfies several properties analogous to those of projections in Hilbert spaces, including nonexpansiveness \cite{Walter1974}.

Projected-gradient and closely related proximal-gradient methods have already been studied on Hadamard manifolds. In particular, the fixed-step projected-gradient iteration can be recovered from the intrinsic proximal-gradient framework of \cite{Feng2022} by taking the nonsmooth term as the indicator function of \(C\). Nevertheless, this formal inclusion does not make a direct study of projected-gradient methods redundant. The projection structure is considerably more specific and allows one to exploit geometric properties of the metric projection, to work with a natural projection-based stationarity measure, and to derive convergence results designed to constrained smooth optimization. Moreover, the available proximal-gradient results do not directly cover the setting considered here: a backtracking intrinsic proximal-gradient method without assuming Lipschitz continuity of the Riemannian gradient was recently studied in \cite{BentoSantiago2026}, but under a coercivity assumption, whereas the closest direct predecessor of our backtracking projected-gradient method is \cite{bergmann2025projection}, where a projected gradient step followed by an Armijo line search along the feasible geodesic segment is analyzed for possibly nonconvex objectives, but only on hyperbolic space forms. Thus, despite the fact that projected gradient can be embedded into a more general proximal-gradient framework, its direct analysis on general Hadamard manifolds still presents a genuine gap. Given the fundamental role of projected-gradient methods in constrained optimization, closing this gap is both natural and relevant. To the best of our knowledge, an analysis on general Hadamard manifolds of the backtracking projected-gradient method considered here, in which the parameter used to compute the projected point and the Armijo steplength along the corresponding feasible geodesic are decoupled, without compactness or coercivity assumptions in the cluster-point stationarity result, is not currently available.

In this paper, we analyze two projected-gradient schemes for solving problem~\eqref{eq:intro-prob1}, namely,  a constant-stepsize scheme and a backtracking scheme. Both algorithms combine a Riemannian gradient step with a metric projection onto the feasible set and then update the iterate along the geodesic segment joining the current point to its projection. The constant-stepsize scheme is analyzed under the assumption that the objective function has a Lipschitz continuous Riemannian gradient, whereas the backtracking variant does not require this assumption for its basic convergence analysis. Our focus is a direct projection-based convergence and complexity analysis on general Hadamard manifolds, with particular emphasis on the backtracking scheme. In the backtracking variant, the line-search procedure guarantees sufficient decrease of the objective function and automatically selects admissible stepsizes without requiring any prior knowledge of Lipschitz constants.

Our analysis establishes the fundamental convergence properties of both algorithms and provides a unified complexity framework. In particular, the main contributions of this paper are summarized as follows.

\begin{itemize}

\item For the constant-stepsize scheme, under Lipschitz continuity of the Riemannian gradient, we provide a direct projection-based analysis yielding a sufficient-descent inequality and stationarity of every accumulation point, without assuming compactness of the feasible set.

\item For the backtracking scheme, in which the parameter defining the projected point and the Armijo steplength along the resulting feasible geodesic are decoupled, we prove well-definedness and show that, provided the trial line-search stepsizes are uniformly bounded away from zero, every accumulation point is first-order stationary. This result requires neither Lipschitz continuity of the Riemannian gradient nor compactness or coercivity. Compactness of the feasible set or of the initial lower level set is needed only when existence of accumulation points is required.

\item Under Lipschitz continuity of the Riemannian gradient, we derive iteration-complexity bounds of order \(O(1/\sqrt{N})\) for the stationarity measure \(d(p_k,p_{k+1})\) in the constant-stepsize scheme and for \(d(p_k,z_k)\) in the backtracking scheme. For the latter, the trial line-search stepsizes are additionally assumed to be uniformly bounded away from zero. We also obtain the corresponding \(\varepsilon\)-complexity estimates and asymptotic regularity of the generated sequences under the respective assumptions.

\item Finally, we illustrate the practical performance of the proposed methods by solving constrained Karcher mean problems on the manifold of symmetric positive definite (SPD) matrices under the constraint
\begin{equation}\label{set:num.ex.in}
\left\{X\in\mathbb{P}^n:\;\alpha I\preceq X\right\},
\end{equation}
which arises naturally in several applications; see, for example, \cite{Afsari2011,bhatia2006riemannian}.

\end{itemize}
{\bf Related work.} Gradient-projection ideas along geodesics go back at least to \cite{Luenberger1972}, and other projected-gradient formulations on Riemannian manifolds have been considered, for example, in \cite{Hauswirth2016,deng2025decentralized}. In the Hadamard setting, Zhang and Sra \cite{Sra2016first} study projected first-order methods for geodesically convex optimization, while Martínez-Rubio et al.~\cite{Martinez023} analyze metric-projected Riemannian gradient descent in a strongly geodesically convex setting. Decentralized online and stochastic projected-gradient variants are considered in \cite{ChenSun2024,LiZhong2026}. These results concern settings or assumptions different from those considered here and therefore do not subsume the present analysis. A closely related line of work arises from proximal-gradient methods. Feng et al.~\cite{Feng2022} develop an intrinsic fixed-step proximal-gradient method for possibly nonconvex problems on Hadamard manifolds; when the nonsmooth term is the indicator function of $C$, the corresponding proximal step reduces to the metric projection onto $C$. Thus, their framework contains the constant-stepsize projected-gradient iteration as a particular case. More recently, Bento and Santiago \cite{BentoSantiago2026} study an intrinsic backtracking proximal-gradient method on Hadamard manifolds without assuming Lipschitz continuity of the gradient, but their convergence analysis relies on a coercivity assumption. The closest work to the present one is \cite{bergmann2025projection}, which directly analyzes constant- and backtracking projected-gradient schemes for possibly nonconvex objectives without compactness or coercivity assumptions, but only on hyperbolic space forms. Consequently, the above results do not cover the projected-direction backtracking scheme studied here on general Hadamard manifolds. In particular, our cluster-point stationarity analysis requires neither Lipschitz continuity of the Riemannian gradient nor compactness or coercivity, while the complexity analysis is obtained under Lipschitz continuity for a natural projection-based stationarity measure. Thus, the present results complement and extend the existing projected- and proximal-gradient literature rather than being subsumed by it.

The remainder of the paper is organized as follows. Section~\ref{sec:Preliminaries} introduces the geometric background and notation used throughout the paper. Section~\ref{sec:Proj} reviews the main properties of the metric projection onto geodesically convex subsets of Hadamard manifolds. Section~\ref{sec:spd} provides a brief overview of the manifold of symmetric positive definite matrices and derives the metric projection onto the set \eqref{set:num.ex.in}. Section~\ref{sec;opt.cond} establishes first-order optimality conditions for problem~\eqref{eq:intro-prob1}. Sections~\ref{sec:Alg1} and~\ref{sec:Alg2} present the projected gradient methods, without and with backtracking, respectively, and analyze their convergence properties. Section~\ref{sec:ComplAnal} is devoted to the iteration-complexity analysis of the proposed methods. Finally, the last two sections report numerical experiments illustrating the practical performance of the algorithms and conclude the paper with some final remarks.

%%%%%%%%%%%%%%%%%%%%%%%%%%%%%%%%%%%%%%%
\section{Preliminaries}\label{sec:Preliminaries}

In this section, we recall basic concepts and results from Riemannian geometry that will be used throughout the paper. Standard references include \cite{Ballmann1985, BridsonHaefliger1999, Sakai1996}. 

Throughout this paper, $\mathbb{M}$ denotes a finite-dimensional Hadamard manifold and  we use \(\mathbb N:=\{0,1,2,\ldots\}\). For each $p\in\mathbb{M}$, we denote by $T_p\mathbb{M}$ the tangent space of $\mathbb{M}$ at $p$, by $T\mathbb{M}$ its tangent bundle. The norm induced by the Riemannian metric $\langle\cdot,\cdot\rangle$ is denoted by $\|\cdot\|$.

Let $f:\mathbb{M}\to\mathbb{R}$ be a differentiable function. The differential of $f$ at $p\in\mathbb{M}$ is the linear mapping
$
Df(p):T_p\mathbb{M}\to\mathbb{R},
$
defined by
\[
Df(p)[v]
=
\lim_{t\to 0}
\frac{f(\exp_p(tv))-f(p)}{t},
\qquad \forall
v\in T_p\mathbb{M}.
\]
The Riemannian gradient of $f$ at $p$, denoted by $\grad f(p)$, is the unique vector in $T_p\mathbb{M}$ satisfying
$
Df(p)[v]
=
\langle \grad f(p),v\rangle$ for all $v\in T_p\mathbb{M}$.

For a piecewise smooth curve $\gamma:[a,b]\to\mathbb{M}$, we denote its length by $\ell(\gamma)$. The Riemannian distance between two points $p,q\in\mathbb{M}$ is denoted by $d(p,q)$. Since $\mathbb{M}$ is complete, the metric space $(\mathbb{M},d)$ is complete and induces the original manifold topology. For each $p\in\mathbb{M}$, the exponential map $\exp_p:T_p\mathbb{M}\to\mathbb{M}$ is given by $\exp_p(v)=\gamma_{p,v}(1)$,
where $\gamma_{p,v}$ is the geodesic satisfying $\gamma_{p,v}(0)=p$ and $\gamma'_{p,v}(0)=v$.
Consequently,
$\gamma_{p,v}(t)=\exp_p(tv)$ for all $t\in\mathbb{R}$.

Since $\mathbb{M}$ is a Hadamard manifold, the exponential map $\exp_p$ is a global diffeomorphism for every $p\in\mathbb{M}$. Its inverse is denoted by
$\log_p:\mathbb{M}\to T_p\mathbb{M}$.
Moreover,
$
d(p,q)=\|\log_p q\|$ for all  $p,q\in\mathbb{M}$. For a fixed $q\in\mathbb{M}$, the distance function
$
d_q:\mathbb{M}\setminus\{q\}\to\mathbb{R}$ defined by
$d_q(p)=d(p,q)$
is of class $C^\infty$, and
$
\grad d_q(p)
=
(-\log_p q)/d(p,q)$ for all $ p\neq q$.
Furthermore, the function
$
d_q^2:\mathbb{M}\to\mathbb{R}$ defined by $
d_q^2(p)=d^2(p,q)$
is $C^\infty$ and satisfies
$
\grad d_q^2(p)=-2\log_p q$ for all $p\in\mathbb{M}$.

Let $\{p_k\}$ and $\{q_k\}$ be sequences in $\mathbb{M}$ converging to $\bar p$ and $\bar q$, respectively. Then
$
\lim_{k\to\infty}\log_{p_k}q_k
=
\log_{\bar p}\bar q$. For $p,q\in\mathbb{M}$, we denote by $\gamma_{pq}:[0,1]\to\mathbb{M}$ the unique geodesic segment joining $p$ and $q$, namely,
$\gamma_{pq}(0)=p$ and $\gamma_{pq}(1)=q$. Its length equals $d(p,q)$. The parallel transport along $\gamma_{pq}$ from $p$ to $q$ is denoted by
$P_{pq}:T_p\mathbb{M}\to T_q\mathbb{M}$.

The following comparison result for geodesic triangles is a direct consequence of \cite[Proposition 4.5]{Sakai1996}.

\begin{lemma}\label{le:CosLawF}
Let $\mathbb{M}$ be a Hadamard manifold. Then
\begin{equation*}
d^2(x,y)+d^2(x,z)
-2\langle\log_x y,\log_x z\rangle
\le d^2(y,z),
\qquad
\forall\,x,y,z\in\mathbb{M}.
\end{equation*}
\end{lemma}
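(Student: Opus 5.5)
We will derive the inequality from the fact that on a Hadamard manifold the exponential map $\exp_x$ does not decrease distances. This is Rauch comparison with the Euclidean model, and it is the content of \cite[Proposition 4.5]{Sakai1996}. It is the natural route because the left-hand side of Lemma~\ref{le:CosLawF} is exactly a squared Euclidean distance in $T_x\mathbb{M}$.

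Fix $x,y,z\in\mathbb{M}$ and set $u:=\log_x y$ and $v:=\log_x z$ in $T_x\mathbb{M}$. Since $\exp_x$ is a global diffeomorphism, we have $y=\exp_x u$ and $z=\exp_x v$. Moreover, $d(x,y)=\|u\|$ and $d(x,z)=\|v\|$, as recalled in Section~\ref{sec:Preliminaries}. Expanding the Euclidean norm in $T_x\mathbb{M}$ gives
\[
\|u-v\|^2=\|u\|^2+\|v\|^2-2\langle u,v\rangle=d^2(x,y)+d^2(x,z)-2\langle \log_x y,\log_x z\rangle .
\]
So it suffices to prove $\|u-v\|\le d(\exp_x u,\exp_x v)$.

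To show this, we take the minimizing geodesic $\gamma$ from $y$ to $z$, so that $\ell(\gamma)=d(y,z)$. We then lift it to the curve $c(t):=\log_x\gamma(t)$ in $T_x\mathbb{M}$, which joins $u$ to $v$. The key step is the pointwise estimate $\|d(\exp_x)_{w}[\xi]\|\ge\|\xi\|$ for all $w,\xi\in T_x\mathbb{M}$. Applied along $c$, with $\gamma=\exp_x\circ c$, it yields
\[
d(y,z)=\ell(\gamma)=\int\|d(\exp_x)_{c(t)}[c'(t)]\|\,dt\ \ge\ \int\|c'(t)\|\,dt\ \ge\ \|u-v\|,
\]
and squaring this gives the lemma.

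The main obstacle is the pointwise estimate $\|d(\exp_x)_w[\xi]\|\ge\|\xi\|$. We plan to prove it with Jacobi fields. Split $\xi$ into its component along $w$ and its component orthogonal to $w$. The radial part is preserved by the Gauss lemma. The orthogonal part is $d(\exp_x)_w[\xi]=J(1)$, where $J$ is the Jacobi field along $t\mapsto\exp_x(tw)$ with $J(0)=0$ and $J'(0)=\xi$. Because the sectional curvature is nonpositive, we have $\frac{d^2}{dt^2}\|J\|^2=2\|J'\|^2-2\langle R(J,\gamma')\gamma',J\rangle\ge 2\|J'\|^2$, and a standard Rauch-type comparison then gives $\|J(t)\|\ge t\|\xi\|$. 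Since the two parts remain orthogonal after applying $d(\exp_x)_w$, again by the Gauss lemma, the estimate follows. In practice we would cite this step from \cite[Proposition 4.5]{Sakai1996} rather than reprove it.
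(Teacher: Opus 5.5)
Your proof is correct and is essentially the argument behind the paper's one-line justification. The paper simply invokes the geodesic-triangle comparison of Sakai's Proposition~4.5, $l_2^2+l_3^2-2l_2l_3\cos\theta_1\le l_1^2$, rewritten using $l_2l_3\cos\theta_1=\langle\log_x y,\log_x z\rangle$; that comparison is exactly your statement $\|\log_x y-\log_x z\|\le d(y,z)$, and it is proved from the fact that $\exp_x$ does not decrease lengths.

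Two minor points. First, the cited proposition is this triangle comparison, not the pointwise estimate $\|d(\exp_x)_w[\xi]\|\ge\|\xi\|$; the latter is a corollary of Rauch's theorem. Second, in your Jacobi-field sketch the inequality $\frac{d^2}{dt^2}\|J\|^2\ge 2\|J'\|^2$ alone only gives convexity of $\|J\|^2$; to get $\|J(t)\|\ge t\|\xi\|$ you must combine it with Cauchy--Schwarz to obtain convexity of $\|J\|$ itself.
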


The next definition is standard in optimization on Riemannian manifolds.

\begin{definition}\label{Def:GradLips}
Let $\Omega\subseteq\mathbb{M}$ and let $f:\Omega\to\mathbb{R}$ be differentiable. The gradient vector field $\grad f$ is said to be Lipschitz continuous on $\Omega$ with Lipschitz constant $L\ge0$ if
\[
\|P_{pq}\grad f(p)-\grad f(q)\|
\le
L\,d(p,q),
\qquad
\forall\, p,q\in\Omega.
\]
\end{definition}
A subset $C\subset \mathbb{M}$ is said to be convex if$\gamma_{pq}(t)\in C$ for all$p,q\in C$ and  $t\in[0,1]$.  A function \(f:C\to\mathbb R\) is said to be geodesically convex if \(f(\gamma_{pq}(t))\le (1-t)f(p)+tf(q)\) for all \(p,q\in C\) and \(t\in[0,1]\); it is strictly geodesically convex if the inequality is strict whenever \(p\neq q\) and \(t\in(0,1)\); and it is \(\mu\)-strongly geodesically convex, with \(\mu>0\), if \(f(\gamma_{pq}(t))\le (1-t)f(p)+tf(q)-\frac{\mu}{2}t(1-t)d^2(p,q)\) for all \(p,q\in C\) and \(t\in[0,1]\).

The following lemma provides the Riemannian counterpart of the classical descent estimate for differentiable functions with Lipschitz continuous gradient on convex sets; see \cite[Proposition 10.54]{Boumal2020}.

\begin{lemma}\label{le:lc}
Let $\Omega\subseteq\mathbb{M}$ and let $f:\Omega\to\mathbb{R}$ be differentiable. Assume that $C\subseteq\Omega$ is convex and that $\grad f$ is Lipschitz continuous on $C$ with constant $L\ge0$. Then
\[
f(q)
\le
f(p)
+
\langle \grad f(p),\log_p q\rangle
+
\frac{L}{2}d^2(p,q),
\qquad
\forall\, p,q\in C.
\]
\end{lemma}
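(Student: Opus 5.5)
Fix $p,q\in C$ and work along the geodesic segment $\gamma(t):=\gamma_{pq}(t)=\exp_p(t\log_p q)$, $t\in[0,1]$. Because $C$ is convex, $\gamma([0,1])\subset C\subseteq\Omega$. Set $\varphi(t):=f(\gamma(t))$. The idea is to write $\varphi(1)-\varphi(0)$ as an integral of $\varphi'$ and then compare $\varphi'(t)$ with $\varphi'(0)$ by means of the Lipschitz condition.

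\textbf{Differentiability of $\varphi$.} Since $\gamma$ is a geodesic, $\gamma(t+s)=\exp_{\gamma(t)}(s\gamma'(t))$. The definition of the differential then gives
\[
\varphi'(t)=Df(\gamma(t))[\gamma'(t)]=\langle \grad f(\gamma(t)),\gamma'(t)\rangle .
\]
At the endpoint $t=1$ the derivative is one-sided, which is harmless. In particular,
\[
\varphi'(0)=\langle \grad f(p),\log_p q\rangle .
\]

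\textbf{Comparison with $\varphi'(0)$.} Parallel transport along $\gamma$ preserves inner products and maps $\gamma'(0)$ to $\gamma'(t)$. Denote by $P_{p\gamma(t)}$ the transport along the geodesic from $p$ to $\gamma(t)$; this is the restriction of $\gamma$, by uniqueness of geodesics on a Hadamard manifold. We then have
\[
\varphi'(t)-\varphi'(0)=\big\langle \grad f(\gamma(t))-P_{p\gamma(t)}\grad f(p),\,\gamma'(t)\big\rangle .
\]
By Cauchy--Schwarz and Definition~\ref{Def:GradLips} applied to the pair $p,\gamma(t)\in C$,
\[
|\varphi'(t)-\varphi'(0)|\le L\,d(p,\gamma(t))\,\|\gamma'(t)\|=L\,t\,d(p,q)\cdot d(p,q),
\]
using $d(p,\gamma(t))=t\,d(p,q)$ and $\|\gamma'(t)\|=\|\log_p q\|=d(p,q)$.

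\textbf{Integration.} It remains to write
\[
f(q)-f(p)=\int_0^1\varphi'(t)\,dt\le \varphi'(0)+\int_0^1 L\,t\,d^2(p,q)\,dt=\langle\grad f(p),\log_pq\rangle+\frac L2 d^2(p,q).
\]

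\textbf{Main obstacle.} The delicate point is justifying the fundamental theorem of calculus, since $f$ is only assumed differentiable and not $C^1$. Two routes handle it. First, the estimate above shows $\varphi'$ is bounded on $[0,1]$; a function that is differentiable everywhere with bounded derivative is Lipschitz, hence absolutely continuous, so $\varphi(1)-\varphi(0)=\int_0^1\varphi'$ with $\varphi'$ Lebesgue integrable. Alternatively, one can avoid integration altogether: apply the mean value theorem to
\[
\psi(t)=\varphi(t)-t\,\varphi'(0)-\tfrac{L}{2}t^2d^2(p,q).
\]
Its derivative is nonpositive by the comparison bound, so $\psi(1)\le\psi(0)$, which is exactly the claimed inequality. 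I would present this second route, because it is cleaner.

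The remaining care is to identify $P_{p\gamma(t)}$ with transport along the restriction of $\gamma_{pq}$, which follows from uniqueness of geodesics on a Hadamard manifold.
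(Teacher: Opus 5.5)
Your proposal is correct, and it is essentially the argument behind the result the paper relies on: the paper gives no proof of its own and simply cites \cite[Proposition 10.54]{Boumal2020}, whose proof likewise restricts $f$ to the geodesic, uses parallelism of $\gamma'$ to write $\varphi'(t)-\varphi'(0)=\langle \grad f(\gamma(t))-P_{p\gamma(t)}\grad f(p),\gamma'(t)\rangle$, bounds this by $L\,t\,d^2(p,q)$, and integrates. Your mean-value detour via $\psi$ is valid but not needed: the same Lipschitz estimate applied between $\gamma(s)$ and $\gamma(t)$ gives $|\varphi'(t)-\varphi'(s)|\le L|t-s|\,d^2(p,q)$, so $\varphi'$ is continuous on $[0,1]$ and the ordinary fundamental theorem of calculus applies directly.
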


%---------------------------------------------------------------------------------------------------------------
\subsection{Metric Projection}\label{sec:Proj}
Let \(C\subseteq\mathbb M\) be a nonempty closed set and let \(q\in\mathbb M\). Consider the constrained optimization problem
\begin{equation}\label{d:copf}
\min_{p\in C} d(p,q).
\end{equation}
Since \(\mathbb M\) is a finite-dimensional complete Riemannian manifold, the Hopf--Rinow theorem implies that closed bounded subsets of \(\mathbb M\) are compact. Any minimizing sequence for problem~\eqref{d:copf} is bounded and therefore admits a convergent subsequence. Because \(C\) is closed, the limit belongs to \(C\), and continuity of the distance function shows that the minimum is attained. Such a minimizer is called a projection of \(q\) onto \(C\) and is denoted by \({\cal P}_C(q)\). It is worth noting that this concept was originally studied in~\cite{Walter1974}.

The next lemma collects some basic properties of the projection mapping 
${\cal P}_C : \mathbb M\to C$.  
For complete proofs, see 
\cite[Thm.~2.1.12]{bacak2014convex} and \cite[Cor.~3.1]{Ferreira2002}.

\begin{lemma}\label{Lem:nonexp.int.neg}
Let $\mathbb M$ be a Hadamard manifold, and let $C \subseteq \mathbb M$ be a closed convex set.  
Then the projection ${\cal P}_C : \mathbb M\to C$ has the following properties:
\begin{enumerate}
\item[(i)] it is single-valued;

\item[(ii)] it is nonexpansive, i.e., the inequality
$d\!\left({\cal P}_C(p),\, {\cal P}_C(q)\right) \leq d(p, q)$ holds for all $p, q \in \mathbb M$. Consequently, ${\cal P}_C$ is continuous;

\item[(iii)] for every $q \in \mathbb M$, the inequality
$
\langle 
\log_{{\cal P}_C(q)} q,\,
\log_{{\cal P}_C(q)} p
\rangle 
\leq 0
$ holds for  all $p \in C$.
\end{enumerate}
\end{lemma}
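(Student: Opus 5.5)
The plan is to prove the three items in the order (iii), (i), (ii). The variational inequality (iii) is the engine for the other two.

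\emph{Item (iii).} Fix $q\in\mathbb M$ and let $\bar p$ be a projection of $q$ onto $C$; existence was shown above via Hopf--Rinow. Take any $p\in C$ and consider the geodesic $\gamma(t)=\exp_{\bar p}(t\log_{\bar p}p)$, $t\in[0,1]$. By convexity of $C$, $\gamma(t)\in C$, so $\varphi(t):=d_q^2(\gamma(t))\ge d_q^2(\bar p)=\varphi(0)$ for all $t\in[0,1]$. Since $d_q^2$ is $C^\infty$ with $\grad d_q^2(\bar p)=-2\log_{\bar p}q$, we get $\varphi'(0)=-2\langle\log_{\bar p}q,\log_{\bar p}p\rangle$. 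Minimality at $t=0$ gives $\varphi'(0)\ge0$, which is exactly (iii). This argument applies to any minimizer $\bar p$, not only to a distinguished one.

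\emph{Item (i).} Suppose $\bar p_1,\bar p_2$ are both projections of $q$. Apply (iii) at $\bar p_1$ with $p=\bar p_2$, and at $\bar p_2$ with $p=\bar p_1$. Then apply Lemma~\ref{le:CosLawF} to the triangle with vertex $x=\bar p_1$ and $y=q$, $z=\bar p_2$:
\[
d^2(\bar p_1,q)+d^2(\bar p_1,\bar p_2)\le d^2(q,\bar p_2)+2\langle\log_{\bar p_1}q,\log_{\bar p_1}\bar p_2\rangle\le d^2(q,\bar p_2).
\]
Since $d(\bar p_1,q)=d(\bar p_2,q)$, this forces $d(\bar p_1,\bar p_2)=0$.

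\emph{Item (ii).} This is the main obstacle, because there is no linear structure for adding the two variational inequalities as in Hilbert space. Let $a={\cal P}_C(p)$ and $b={\cal P}_C(q)$, and assume $a\ne b$. Consider the geodesic $\sigma$ from $a$ to $b$, with unit speed and length $\ell=d(a,b)$. By (iii), the angle at $a$ between $\log_a p$ and $\log_a b$ is at least $\pi/2$, and likewise at $b$ between $\log_b q$ and $\log_b a$.

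My first attempt uses convexity of the distance between geodesics. On a Hadamard manifold, for geodesics $\alpha$ from $a$ to $p$ and $\beta$ from $b$ to $q$, the function $t\mapsto d(\alpha(t),\beta(t))$ is convex. Its right derivative at $t=0$ equals
\[
\frac{1}{d(a,b)}\bigl(-\langle\log_a b,\log_a p\rangle-\langle\log_b a,\log_b q\rangle\bigr)\ge0,
\]
by the first variation formula and (iii). A convex function with nonnegative derivative at $0$ is nondecreasing, so $d(a,b)\le d(p,q)$.

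If that convexity fact cannot be taken as given, I would instead use a comparison quadrilateral. Alternatively, I would cite the standard references \cite{bacak2014convex,Ferreira2002}, as the paper does. Continuity of ${\cal P}_C$ then follows immediately from nonexpansiveness.
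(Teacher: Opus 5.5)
Your proof is correct. The paper itself gives no argument for this lemma; it only cites \cite[Thm.~2.1.12]{bacak2014convex} and \cite[Cor.~3.1]{Ferreira2002}. Those are a metric (CAT(0)) treatment valid in arbitrary Hadamard spaces and a proximal-point framework in which ${\cal P}_C$ arises from the indicator function of $C$. Your route is intrinsically Riemannian and uses only the paper's own preliminaries:
\begin{itemize}
\item for (iii), first-order optimality of $d_q^2$ along feasible geodesics, using $\grad d_q^2=-2\log q$;
\item for (i), Lemma~\ref{le:CosLawF} combined with (iii);
\item for (ii), convexity of $t\mapsto d(\alpha(t),\beta(t))$ together with the first variation formula.
\end{itemize}
Proving (iii) for \emph{every} minimizer before uniqueness is known is exactly what makes the order (iii)$\Rightarrow$(i) legitimate. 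In (i) you in fact need (iii) at only one of the two minimizers.

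What your approach costs is reliance on the Hopf--Rinow existence argument and on the smooth structure. A metric treatment instead gets existence and uniqueness from the strong convexity of $d^2(\cdot,q)$, which needs no local compactness and therefore also covers infinite-dimensional Hadamard spaces. In (ii), the only genuinely curvature-dependent input is the convexity of the distance along a pair of geodesics. Your comparison-quadrilateral fallback is only gestured at, so you should cite this fact explicitly, e.g.\ \cite[Prop.~II.2.2]{BridsonHaefliger1999}. Note that the chord inequality $g(t)\le(1-t)g(0)+t\,g(1)$ alone already gives $g'(0)\le g(1)-g(0)$, which is all you use. Finally, the unit-speed geodesic $\sigma$ you introduce in (ii) is never used and can be dropped.
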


The next lemma derives a fundamental inequality for metric projections onto closed convex sets in Hadamard manifolds. In particular, it yields an estimate for projected gradient-type steps that plays a central role in the analysis of the proposed algorithm.

\begin{lemma} \label{le:coslawap}
Let $C \subseteq \mathbb M$ be a closed convex set and let $q \in \mathbb M$. Then,
\begin{equation}\label{eq:dlcf}
d^2(p, {\cal P}_C(q)) \leq \left\langle \log_p q, \log_p {\cal P}_C(q) \right\rangle,
\quad \forall\, p \in C.
\end{equation}
In particular, for every $\alpha > 0$ and every $v \in T_p\mathbb M$, 
\begin{equation} \label{eq;snlpf}
\left\langle v, \log_{p} {\cal P}_C\left( \exp_{p}(-\alpha v) \right) \right\rangle 
\leq - \frac{1}{\alpha} d^2\left(p, {\cal P}_{C}(\exp_{p}(-\alpha v))\right).
\end{equation}
\end{lemma}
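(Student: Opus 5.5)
We prove \eqref{eq:dlcf} by combining the comparison inequality of Lemma~\ref{le:CosLawF} with the variational characterization in Lemma~\ref{Lem:nonexp.int.neg}(iii). Then we derive \eqref{eq;snlpf} by specializing $q$. Throughout, fix $p\in C$ and write $\bar q:={\cal P}_C(q)$.

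\textbf{Step 1: comparison at the projected point.} Apply Lemma~\ref{le:CosLawF} to the triangle with vertex $x=\bar q$ and $y=p$, $z=q$:
\[
d^2(\bar q,p)+d^2(\bar q,q)-2\langle \log_{\bar q}p,\log_{\bar q}q\rangle\le d^2(p,q).
\]
Since $p\in C$, Lemma~\ref{Lem:nonexp.int.neg}(iii) makes the inner product nonpositive. Hence
\[
d^2(\bar q,p)+d^2(\bar q,q)\le d^2(p,q).
\]

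\textbf{Step 2: comparison at $p$.} Apply Lemma~\ref{le:CosLawF} with vertex $x=p$ and $y=q$, $z=\bar q$:
\[
d^2(p,q)+d^2(p,\bar q)-2\langle \log_p q,\log_p\bar q\rangle\le d^2(q,\bar q).
\]
Add this to the inequality from Step 1. The terms $d^2(p,q)$ and $d^2(q,\bar q)$ cancel, leaving
\[
2d^2(p,\bar q)\le 2\langle \log_p q,\log_p \bar q\rangle,
\]
which is exactly \eqref{eq:dlcf}.

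\textbf{Step 3: the particular case.} Take $q=\exp_p(-\alpha v)$, so that $\log_p q=-\alpha v$. Then \eqref{eq:dlcf} reads
\[
d^2(p,\bar q)\le -\alpha\langle v,\log_p\bar q\rangle.
\]
Dividing by $\alpha>0$ gives \eqref{eq;snlpf}.

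The only delicate point is choosing the two vertex placements in Lemma~\ref{le:CosLawF} so that, after using Lemma~\ref{Lem:nonexp.int.neg}(iii), the unwanted distance terms cancel upon summation. Everything after that is routine.
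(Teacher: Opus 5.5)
Your proof is correct and follows essentially the paper's argument. You apply Lemma~\ref{le:CosLawF} twice, once with vertex $p$ and once with vertex ${\cal P}_C(q)$, and sum so that $d^2(p,q)$ and $d^2(q,{\cal P}_C(q))$ cancel; you drop the inner product at the projected point via Lemma~\ref{Lem:nonexp.int.neg}(iii), and then substitute $q=\exp_p(-\alpha v)$. The only cosmetic difference is that you discard that nonpositive inner product before adding the two inequalities, whereas the paper does so after.
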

\begin{proof} 
Applying Lemma~\ref{le:CosLawF} with $ (x,y,z)=\bigl(p,{\cal P}_C(q),q\bigr) $ and $ (x,y,z)=\bigl({\cal P}_C(q),p,q\bigr)$,  we obtain 
\begin{align*}
 d^2\bigl(p,{\cal P}_C(q)\bigr) +d^2(p,q) -2\Big\langle \log_p{\cal P}_C(q), \log_p q \Big\rangle &\le d^2\bigl({\cal P}_C(q),q\bigr)\\
  d^2\bigl({\cal P}_C(q),p\bigr) +d^2\bigl({\cal P}_C(q),q\bigr) -2\Big\langle \log_{{\cal P}_C(q)}p, \log_{{\cal P}_C(q)}q \Big\rangle &\le d^2(p,q). 
  \end{align*}
  Combining the above inequalities and using Lemma~\ref{Lem:nonexp.int.neg}\,($iii$), we obtain 
  $$
   d^2\bigl(p,{\cal P}_C(q)\bigr) \le \left\langle \log_p q, \log_p {\cal P}_C(q) \right\rangle + \left\langle \log_{{\cal P}_C(q)} q, \log_{{\cal P}_C(q)} p \right\rangle \le \left\langle \log_p q, \log_p {\cal P}_C(q) \right\rangle,
   $$
   which proves \eqref{eq:dlcf}. Now let $\alpha>0$ and $v\in T_p\mathbb M$. Applying \eqref{eq:dlcf} with $ q=\exp_p(-\alpha v)$,  and using the identity $ \log_p\bigl(\exp_p(-\alpha v)\bigr)=-\alpha v$,  we obtain $ d^2\bigl(p,{\cal P}_C(\exp_p(-\alpha v))\bigr) \le \left\langle -\alpha v, \log_p{\cal P}_C(\exp_p(-\alpha v)) \right\rangle$.  Dividing both sides by $\alpha>0$ yields \eqref{eq;snlpf}. 
   \end{proof}

%---------------------------------------------------------------------------------------------------------------
\subsection{SPD manifold}\label{sec:spd}
Let \( \mathbb{R}^{n \times n} \) denote the set of real matrices of order \( n \times n \),  
\( \mathbb{S}^n\subset \mathbb{R}^{n \times n} \) the set of symmetric matrices, and  
\( \mathbb{P}^{n}\subset \mathbb{R}^{n \times n} \) the cone of symmetric positive definite (SPD)  matrices.  
Define the inner product
\begin{equation*} \label{eq:metric}
\langle U, V \rangle_{X} := \operatorname{tr}\!\left(V X^{-1} U X^{-1}\right), 
\quad X \in \mathbb{P}^n, \; U, V \in \mathbb{S}^n,
\end{equation*}
where \( \operatorname{tr}(\cdot) \) denotes the trace operator.  It is well known that  
\( \mathbb{M} = (\mathbb{P}^n, \langle \cdot, \cdot \rangle) \)  
is a Hadamard manifold (see, for instance,~\cite[Theorem~1.2, p.~325]{Lang1999}),  
and that the tangent space \( T_X \mathbb{M} \) can be naturally identified with \( \mathbb{S}^n\) for every \( X \in \mathbb{M} \). For simplicity, we will henceforth denote 
$(\mathbb{P}^n, \langle \cdot, \cdot \rangle)$ by $\mathbb{P}^n$.
Moreover, its sectional curvature is bounded below by \( -1/2 \); see~\cite[Proposition~A.2]{louzeiro2022projected}.
For \( X \in \mathbb{P}^n\) and \( V \in T_X \mathbb{P}^n\),  
the exponential and logarithmic maps are given respectively by
\begin{equation}\label{eq:exp.log.matrix}
\exp_X(V) = X^{1/2}\Exp\!\big(X^{-1/2} V X^{-1/2}\big)X^{1/2}, 
\qquad
\log_X(Y) = X^{1/2}\Log\!\big(X^{-1/2} Y X^{-1/2}\big)X^{1/2},
\end{equation}
where \( \Exp(\cdot) \) and \( \Log(\cdot) \) denote the standard matrix exponential and logarithm functions. Consequently, the Riemannian distance between \(X,Y\in\mathbb{P}^n\) is
given by
\begin{equation}\label{eq:spd-distance}
d(X,Y)
=
\left\|
\Log\!\left(X^{-1/2}YX^{-1/2}\right)
\right\|_F
=
\left[ \sum_{i=1}^n\log^2(\mu_i) \right]^{1/2},
\end{equation}
where \(\|\cdot\|_F\) denotes the Frobenius norm, and
\(\mu_1,\ldots,\mu_n\) are the eigenvalues of
\(X^{-1/2}YX^{-1/2}\).
Moreover, for every $X,Y\in \mathbb{P}^n$, the geodesic segment $ \gamma_{X,Y}:[0,1] \to \mathbb{P}^n$ is given by
\begin{equation}\label{eq:geodesic.matrix}
    \gamma_{X,Y}(t)
    :=
    X^{1/2}
    \left(
        X^{-1/2}YX^{-1/2}
    \right)^t
    X^{1/2}, \qquad  t\in [0,1].
\end{equation}
Let \(f:\mathbb{P}^n\to\mathbb{R}\) be twice continuously differentiable. The gradient and Hessian of \(f\) are given by
\begin{align}
\operatorname{grad} f(X) &= X f'(X) X, \label{SPD:Grad}\\
\operatorname{hess} f(X)V
&=
X f''(X)VX
+\frac{1}{2}\Bigl[
V f'(X)X
+
X f'(X)V
\Bigr],
\label{SPD:Hess}
\end{align}
where \(V \in T_X\mathbb{P}^n \), and \(f'(X)\) and \(f''(X)\) are the Euclidean gradient and Hessian of \(f\) at \(X\) with respect to the Frobenius inner product, respectively. 

\begin{definition}[Loewner Partial Order]
Let $A,B \in \mathbb{S}^n$. We say that $A \preceq B$ if $B - A$ is positive semidefinite, and that $A \prec B$ if $B - A$ is positive definite.
\end{definition}

We conclude this section by establishing a basic property of the set employed in the numerical experiments of Section~\ref{sec:nume.exp}.

\begin{lemma}
Let \(\alpha>0\). Then \(C:=\{X\in\mathbb{P}^n:\alpha I\preceq X\}\) is nonempty, closed, and geodesically convex in \(\mathbb{P}^n\).
\end{lemma}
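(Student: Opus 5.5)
Nonemptiness is immediate, since $\alpha I\in C$ (indeed $\beta I\in C$ for every $\beta\ge\alpha$). For closedness, we write $C=\{X\in\mathbb{P}^n:\lambda_{\min}(X)\ge\alpha\}$. The map $X\mapsto\lambda_{\min}(X)=\min_{\|u\|=1}u^\top Xu$ is continuous on $\mathbb{S}^n$. Hence $C$ is closed in $\mathbb{P}^n$ with its usual (subspace) topology, and this topology coincides with the manifold topology induced by $d$. Equivalently, $C$ is the intersection over unit vectors $u$ of the closed sets $\{X: u^\top Xu\ge\alpha\}$.

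For geodesic convexity, we take $X,Y\in C$ and $t\in[0,1]$, and show $\gamma_{X,Y}(t)\succeq\alpha I$ using the explicit formula \eqref{eq:geodesic.matrix}. The cleanest route is to recognize $\gamma_{X,Y}(t)=X\#_tY$ as the weighted matrix geometric mean. We then invoke two standard facts:
\begin{enumerate}[label=(\alph*)]
\item monotonicity in each argument with respect to the Loewner order, which follows from the L\"owner--Heinz inequality ($0\preceq A\preceq B\Rightarrow A^t\preceq B^t$ for $t\in[0,1]$) together with congruence invariance $Z(A\#_tB)Z^\top=(ZAZ^\top)\#_t(ZBZ^\top)$;
\item the identity $(\alpha I)\#_t(\alpha I)=\alpha I$.
\end{enumerate}

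We carry out the monotonicity argument in two steps. First, in the second argument: set $A=X^{-1/2}YX^{-1/2}$ and $B=\alpha X^{-1}$. Since $Y\succeq\alpha I$, congruence gives $A\succeq B$, and L\"owner--Heinz yields $A^t\succeq B^t$. Conjugating by $X^{1/2}$ gives
\[
X\#_tY\succeq X\#_t(\alpha I)=X^{1/2}(\alpha X^{-1})^tX^{1/2}=\alpha^tX^{1-t}.
\]
Second, since $X\succeq\alpha I$ and $X$ commutes with $I$, the spectral calculus gives $X^{1-t}\succeq\alpha^{1-t}I$. Therefore $X\#_tY\succeq\alpha^t\alpha^{1-t}I=\alpha I$, so $\gamma_{X,Y}(t)\in C$.

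The main obstacle is the monotonicity step, that is, the use of L\"owner--Heinz (operator monotonicity of $s\mapsto s^t$ for $t\in[0,1]$). This is a classical result, and we would cite it (e.g.\ Bhatia, \emph{Positive Definite Matrices}) rather than reprove it. One should also take care that the symmetric formula $X^{1/2}(\cdot)^tX^{1/2}$ is the one used, so that the congruence argument applies directly. The endpoint cases $t=0$ and $t=1$ are trivial and are also covered by the argument.
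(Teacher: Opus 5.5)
Your proof is correct and follows essentially the same route as the paper. Both use nonemptiness via $\alpha I$ and closedness via the quadratic forms $u^\top Xu$, and for convexity both use congruence by $X^{-1/2}$, operator monotonicity of $s\mapsto s^t$ (L\"owner--Heinz), and conjugation back by $X^{1/2}$. The only cosmetic difference is that the paper first rescales to $\alpha=1$ and applies the final spectral step as $(A^{-1})^t\succeq A^{-1}$ before conjugating, whereas you apply the equivalent inequality $X^{1-t}\succeq\alpha^{1-t}I$ afterwards.
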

\begin{proof}
Since \(\alpha I\in C\), the set \(C\) is nonempty. We next show that \(C\) is closed. Let \(\{X_k\}\subset C\) be a sequence converging to some \(X\in\mathbb{P}^n\). Since \(X_k\succeq\alpha I\) for every \(k\), we have \(v^\top X_kv\ge\alpha\|v\|^2\) for all \(v\in\mathbb{R}^n\). Passing to the limit as \(k\to\infty\) yields \(v^\top Xv\ge\alpha\|v\|^2\) for all \(v\in\mathbb{R}^n\), which is equivalent to \(X\succeq\alpha I\). Hence \(X\in C\), and thus \(C\) is closed. It remains to prove geodesic convexity. Let \(X,Y\in C\) and \(t\in[0,1]\), and define \(A:=\alpha^{-1}X\) and \(B:=\alpha^{-1}Y\). Then, by the definition of \(C\) and \eqref{eq:geodesic.matrix},
\begin{equation}\label{eq:spd-lem}
A\succeq I,\qquad B\succeq I,\qquad \gamma_{X,Y}(t)=\alpha\,\gamma_{A,B}(t).
\end{equation}
Consequently, \(A^{-1/2}BA^{-1/2}\succeq A^{-1/2}IA^{-1/2}=A^{-1}\). Since the map \(\mathbb{P}^n\ni T\mapsto T^t\), for \(t\in[0,1]\), is operator monotone; see \cite[Theorem~V.1.9]{Bhatia1997}, it follows that \(\left(A^{-1/2}BA^{-1/2}\right)^t\succeq(A^{-1})^t\). Moreover, since \(A\succeq I\), every eigenvalue of \(A^{-1}\) belongs to \((0,1]\), and hence \((A^{-1})^t\succeq A^{-1}\). Therefore,
\[
\left(A^{-1/2}BA^{-1/2}\right)^t\succeq A^{-1}.
\]
Multiplying both sides by \(A^{1/2}\) and using \eqref{eq:geodesic.matrix}, we obtain
\[
I=A^{1/2}A^{-1}A^{1/2}\preceq A^{1/2}\left(A^{-1/2}BA^{-1/2}\right)^tA^{1/2}=\gamma_{A,B}(t).
\]
Finally, \eqref{eq:spd-lem} gives \(\gamma_{X,Y}(t)=\alpha\,\gamma_{A,B}(t)\succeq\alpha I\), and hence \(\gamma_{X,Y}(t)\in C\). Therefore, \(C\) is geodesically convex.
\end{proof}

The preceding lemma ensures that \(C=\{X\in\mathbb P^n:\alpha I\preceq X\}\) is closed and geodesically convex. The next result gives an explicit formula for its metric projection.
\begin{proposition}\label{prop:spd-projection}
Let \(\alpha>0\) and define
$
C:=\{X\in\mathbb P^n:\alpha I\preceq X\}.
$
Let \(Y\in\mathbb P^n\) have the spectral decomposition
$
Y=Q\operatorname{diag}(\lambda_1,\ldots,\lambda_n)Q^\top,
$
where \(Q\) is orthogonal, \(\lambda_1,\ldots,\lambda_n\) are the
eigenvalues of \(Y\), ordered as
\(\lambda_1\geq\cdots\geq\lambda_n>0\), and
\(\operatorname{diag}(\lambda_1,\ldots,\lambda_n)\) denotes the diagonal
matrix with diagonal entries \(\lambda_1,\ldots,\lambda_n\). Then
\[
\mathcal P_C(Y)
=
Q\operatorname{diag}\bigl(
\max\{\lambda_1,\alpha\},\ldots,
\max\{\lambda_n,\alpha\}
\bigr)Q^\top.
\]
\end{proposition}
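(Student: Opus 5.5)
Set $Z:=Q\operatorname{diag}(\max\{\lambda_i,\alpha\})Q^\top$. Clearly $Z\succeq\alpha I$, so $Z\in C$. Since $C$ is closed and geodesically convex (previous lemma), Lemma~\ref{Lem:nonexp.int.neg}(i) says the projection is single-valued. By Lemma~\ref{Lem:nonexp.int.neg}(iii) it is characterized by the variational inequality. Indeed, if $\langle\log_Z Y,\log_Z X\rangle_Z\le0$ for all $X\in C$, then Lemma~\ref{le:CosLawF} with $(x,y,z)=(Z,Y,X)$ gives $d^2(Z,Y)+d^2(Z,X)\le d^2(X,Y)$, hence $d(Z,Y)\le d(X,Y)$ for every $X\in C$. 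So the plan is to verify this obtuse-angle condition for $Z$, and I will use it as a sufficient condition rather than relying on the converse.

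First I compute $\log_Z Y$. Since $Z$ and $Y$ are diagonalized by the same $Q$, they commute, so $Z^{-1/2}YZ^{-1/2}=Q\operatorname{diag}(\lambda_i/\max\{\lambda_i,\alpha\})Q^\top$. Formula \eqref{eq:exp.log.matrix} then gives
\[
\log_Z Y=Q\operatorname{diag}\bigl(\max\{\lambda_i,\alpha\}\log(\lambda_i/\max\{\lambda_i,\alpha\})\bigr)Q^\top=:Q\,\operatorname{diag}(w_i)\,Q^\top .
\]
Here $w_i=0$ when $\lambda_i\ge\alpha$, and $w_i=\alpha\log(\lambda_i/\alpha)<0$ otherwise. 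Thus $-\log_Z Y\succeq0$, and it is supported on the eigenvectors with $\lambda_i<\alpha$, where $Z$ acts as $\alpha I$.

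Next I evaluate the inner product $\langle U,V\rangle_Z=\operatorname{tr}(VZ^{-1}UZ^{-1})$ with $U=\log_Z Y$. Write $Z^{-1}UZ^{-1}=Q\operatorname{diag}(w_i/\max\{\lambda_i,\alpha\}^2)Q^\top=\alpha^{-2}U$, because $w_i\neq0$ only where $\max\{\lambda_i,\alpha\}=\alpha$. Therefore $\langle \log_ZY,\log_ZX\rangle_Z=\alpha^{-2}\operatorname{tr}(U\log_Z X)$, and the claim reduces to $\operatorname{tr}(U\log_Z X)\le0$. Since $-U\succeq0$, it suffices to show $P\log_ZX\,P\succeq0$, where $P$ is the orthogonal projector onto the span $E$ of the eigenvectors with $\lambda_i<\alpha$.

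The main obstacle is exactly this compression inequality. The matrix $\log_ZX=Z^{1/2}\Log(Z^{-1/2}XZ^{-1/2})Z^{1/2}$ is not monotone in any obvious entrywise sense, so it must be handled through operator convexity. The route I would try first uses that $\Log$ is operator concave. Let $M:=Z^{-1/2}XZ^{-1/2}$, and note $Z^{1/2}P=\sqrt{\alpha}\,P$. Then
\[
P\log_ZX\,P=\alpha\,P\,\Log(M)\,P .
\]
By the operator Jensen inequality (Hansen--Pedersen) for the operator concave function $\log$ and the compression $P$, we get $P\,\Log(M)\,P\preceq\Log(PMP)$ on $E$. This inequality points the wrong way for a lower bound, so I would instead apply Jensen to the operator convex function $-\log$ to the inverse. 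Since $\Log(M)=-\Log(M^{-1})$, the compression property gives $P\,\Log(M)\,P\succeq-\Log\bigl((PM^{-1}P)|_E\bigr)$. It then remains to show $PM^{-1}P\preceq I$ on $E$. Now $M^{-1}=Z^{1/2}X^{-1}Z^{1/2}$, so $PM^{-1}P=\alpha\,PX^{-1}P\preceq\alpha\cdot\alpha^{-1}P=P$, using $X\succeq\alpha I\Rightarrow X^{-1}\preceq\alpha^{-1}I$. Then $-\Log$ of something $\preceq I$ is $\succeq0$, which gives $P\log_ZX\,P\succeq0$ and completes the argument.
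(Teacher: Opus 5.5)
Your argument is correct, but it takes a different route from the paper's proof.

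\textbf{The paper's route.} It never computes the logarithm map at $\widehat X$. It fixes $X\in C$ and rewrites $X\succeq\alpha I$ as $Y^{-1/2}XY^{-1/2}\succeq\alpha Y^{-1}$. Weyl's monotonicity of ordered eigenvalues then gives $\mu_i\ge\alpha/\lambda_{n+1-i}>1$ for the $r$ indices with $\lambda_{n+1-i}<\alpha$, where the $\mu_i$ are the eigenvalues of $Y^{-1/2}XY^{-1/2}$. Dropping the other terms in $d^2(Y,X)=\sum_i\log^2\mu_i$ yields $d^2(Y,X)\ge\sum_{\lambda_j<\alpha}\log^2(\alpha/\lambda_j)=d^2(Y,\widehat X)$ directly. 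Single-valuedness of $\mathcal P_C$ then finishes the proof.

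\textbf{Your route.} You verify the obtuse-angle condition $\langle\log_Z Y,\log_Z X\rangle_Z\le0$ and convert it into minimality via Lemma~\ref{le:CosLawF}. The individual steps all check out:
\begin{itemize}
\item the explicit formula for $\log_Z Y$;
\item the identity $Z^{-1}UZ^{-1}=\alpha^{-2}U$;
\item the reduction to $P\log_Z X\,P\succeq0$ via $Z^{1/2}P=\sqrt{\alpha}\,P$;
\item the Davis--Choi/Hansen--Pedersen Jensen inequality applied to the operator convex function $-\log$ at $M^{-1}=Z^{1/2}X^{-1}Z^{1/2}$, which gives $P\,\Log(M)P|_E\succeq-\Log\bigl(\alpha PX^{-1}P|_E\bigr)\succeq0$.
\end{itemize}
Applying Jensen to the inverse is exactly the right fix for the wrong-direction bound you first observed.

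\textbf{What each buys.} The paper's proof is more elementary: it needs only Weyl's inequality and the explicit distance formula, with no operator convexity. Yours uses heavier machinery but proves more. It establishes the variational characterization of Lemma~\ref{Lem:nonexp.int.neg}(iii) at $Z$, and hence the Pythagorean-type bound $d^2(Z,Y)+d^2(Z,X)\le d^2(X,Y)$ for all $X\in C$. That bound gives uniqueness of the minimizer without appealing to single-valuedness of the projection.
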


\begin{proof}
Fix \(X\in C\) and define
$
\widehat X:=
Q\operatorname{diag}\bigl(
\max\{\lambda_1,\alpha\},\ldots,
\max\{\lambda_n,\alpha\}
\bigr)Q^\top.
$
The definition of \(C\), together with congruence by \(Y^{-1/2}\), yields
$
Y^{-1/2}XY^{-1/2}\succeq\alpha Y^{-1}.
$
Let \(\mu_1\geq\cdots\geq\mu_n>0\) denote the eigenvalues of
\(Y^{-1/2}XY^{-1/2}\). Since the eigenvalues of \(\alpha Y^{-1}\),
arranged in nonincreasing order, are
\(\alpha/\lambda_n\geq\cdots\geq\alpha/\lambda_1\), the monotonicity of
the ordered eigenvalues with respect to the Loewner order gives
\[
\mu_i\geq\frac{\alpha}{\lambda_{n+1-i}},
\qquad i=1,\ldots,n.
\]
Let \(r\) be the number of eigenvalues of \(Y\) that are strictly smaller
than \(\alpha\). Then
\[
\mu_i\geq\frac{\alpha}{\lambda_{n+1-i}}>1,
\qquad i=1,\ldots,r.
\]
Therefore, using the expression for the affine-invariant Riemannian distance,
\[
\begin{aligned}
d^2(Y,X)
&= \sum_{i=1}^n \log^2(\mu_i)
 \geq \sum_{i=1}^r \log^2(\mu_i)
 \geq \sum_{i=1}^r
 \log^2\left(\frac{\alpha}{\lambda_{n+1-i}}\right)\\
&= \sum_{\lambda_j<\alpha}
\log^2\left(\frac{\alpha}{\lambda_j}\right)
= d^2(Y,\widehat X).
\end{aligned}
\]
Since \(\widehat X\in C\), we conclude that
$
\widehat X=\mathcal P_C(Y),
$
which completes the proof.
\end{proof}

%---------------------------------------------------------------------------------------------------------------
\section{Constrained Optimization on Hadamard Manifolds}\label{sec;opt.cond}

Consider a constrained optimization problem on a Hadamard manifold \( \mathbb M\) of the form
\begin{equation} \label{eq:OptP}
    \min \{ f(p) : p \in C \},
\end{equation}
where \( C \subseteq \mathbb M\) is a closed convex set and \( f : \mathbb M\to \mathbb{R} \) is a continuously differentiable function. Let \( \Omega^* \neq \varnothing \) denote the solution set of  \eqref{eq:OptP}, and let 
\( f^* := \inf_{p \in C} f(p) > -\infty \) be the optimal value of \( f \).

\begin{definition}\label{def:sta.point}
A point \( \bar{p} \in C \) is called a stationary point of Problem 
\eqref{eq:OptP} if it satisfies
$$
\left\langle \grad f(\bar{p}), \log_{\bar{p}} p \right\rangle \ge 0,
\qquad \forall\, p \in C .
$$
\end{definition}

The next result shows that every  solution of \eqref{eq:OptP} is a stationary point. Consequently, stationarity constitutes a necessary first-order optimality condition for ~\eqref{eq:OptP}.

 \begin{proposition}
Let $\mathbb M$ be a Hadamard manifold, and let $C \subseteq \mathbb M$ be a closed convex set.  
If $\bar{p} \in C$ is a solution to Problem~\eqref{eq:OptP}, then $\bar{p}$ is a stationary point.
\end{proposition}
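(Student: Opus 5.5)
The plan is to argue directly along the geodesic segment joining $\bar p$ to an arbitrary feasible point, using convexity of $C$ to stay feasible and the definition of the differential to extract the sign of the directional derivative.

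Fix an arbitrary $p\in C$ and consider the geodesic segment $\gamma_{\bar p p}:[0,1]\to\mathbb M$. Since $\mathbb M$ is Hadamard, it can be written as $\gamma_{\bar p p}(t)=\exp_{\bar p}(t\log_{\bar p}p)$. Because $C$ is convex and $\bar p,p\in C$, we have $\gamma_{\bar p p}(t)\in C$ for all $t\in[0,1]$. Optimality of $\bar p$ then gives
\[
f\bigl(\exp_{\bar p}(t\log_{\bar p}p)\bigr)-f(\bar p)\ge 0 \qquad \text{for all } t\in(0,1].
\]

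Next, divide by $t>0$ and let $t\to0^+$. By the definition of the differential in Section~\ref{sec:Preliminaries}, with $v=\log_{\bar p}p$, the two-sided limit exists because $f$ is differentiable. It therefore coincides with the one-sided limit, and
\[
Df(\bar p)[\log_{\bar p}p]=\lim_{t\to0^+}\frac{f(\exp_{\bar p}(t\log_{\bar p}p))-f(\bar p)}{t}\ge0 .
\]
By the definition of the Riemannian gradient this equals $\langle \grad f(\bar p),\log_{\bar p}p\rangle$, so $\langle \grad f(\bar p),\log_{\bar p}p\rangle\ge0$. Since $p\in C$ was arbitrary, $\bar p$ satisfies Definition~\ref{def:sta.point}.

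There is no real obstacle. The only point needing care is that only the one-sided limit $t\to0^+$ is available, because $\gamma_{\bar p p}(t)$ need not lie in $C$ for $t<0$; differentiability of $f$ ensures this one-sided limit equals $Df(\bar p)[v]$. The case $p=\bar p$ is trivial, since then $\log_{\bar p}p=0$.
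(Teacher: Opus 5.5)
Your proof is correct and uses the same idea as the paper's proof: restrict $f$ to the geodesic $t\mapsto\exp_{\bar p}(t\log_{\bar p}p)$, which stays in $C$ for $t\in[0,1]$ by convexity, and read off the sign of its derivative at $t=0$. The only difference is in presentation: the paper argues by contradiction, using continuity of $\varphi'$ to get $\varphi'<0$ on some $[0,\varepsilon]$ and then integrating, whereas your direct one-sided limit needs only differentiability of $f$ at $\bar p$.
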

\begin{proof}
Suppose, by contradiction, that \( \bar{p} \) is not a stationary point of
\eqref{eq:OptP}. Then there exists \( \tilde{p} \in C \) such that
\begin{equation} \label{Vphi<0}
\left\langle \grad f(\bar{p}), \log_{\bar{p}} \tilde{p} \right\rangle < 0 .
\end{equation}
Define \( \varphi : \mathbb{R} \to \mathbb{R} \) by
$\varphi(t) := f\!\left(\exp_{\bar{p}}\!\big(t \log_{\bar{p}}\tilde{p}\big)\right)$ for all $t \in \mathbb{R}$.
Since both \( f \) and \( \exp_{\bar{p}} \) are continuously differentiable, it follows
that \( \varphi \) is also continuously differentiable.  
Applying the chain rule and using~\eqref{Vphi<0}, we obtain
$
\varphi'(0)
= \left\langle \grad f(\bar{p}), \log_{\bar{p}} \tilde{p} \right\rangle
< 0 .
$
Therefore, there exists \( \varepsilon \in (0,1) \) such that  
\begin{equation}\label{dVphi}
\varphi'(t) < 0 , \qquad \forall\, t \in [0,\varepsilon].
\end{equation}
From the definition of \( \varphi \) and from \eqref{dVphi}, it follows that
\begin{equation}\label{ineq:contr}
f\!\left(\exp_{\bar{p}}\!\big(\varepsilon \log_{\bar{p}}\tilde{p}\big)\right)
- f(\bar{p}) =\varphi(\varepsilon) - \varphi(0)
= \int_0^{\varepsilon} \varphi'(t)\, dt
< 0 .
\end{equation}
On the other hand, since \( C \) is convex and both \( \bar{p} \) and  
\( \tilde{p} \in C \), the point  
$
\exp_{\bar{p}}\!\big(\varepsilon \log_{\bar{p}}\tilde{p}\big)$,
which lies on the geodesic segment connecting \( \bar{p} \) to \( \tilde{p} \), also
belongs to \( C \).  
Hence, inequality~\eqref{ineq:contr} contradicts the fact that \( \bar{p} \) is a
solution of \eqref{eq:OptP}.  
Therefore, \( \bar{p} \) must be a stationary point.
\end{proof}
The next proposition provides conditions involving the projection mapping \( \mathcal{P}_C \) to determine whether a point is stationary for Problem~\eqref{eq:OptP}.

\begin{proposition} \label{pr:ProjProperty}
Let $\mathbb M$ be a Hadamard manifold, and let $C \subseteq \mathbb M$ be a closed convex set.  
Let \( {\bar p} \in C \) be such that \( \grad f(\bar p) \neq 0 \) and \( \alpha > 0 \). Then the following statements hold:
\begin{enumerate}
\item[(i)] \( {\bar p} \) is a stationary point  if and only if  
 ${\bar p} = {\cal P}_C\left(\exp_{{\bar p}}(-\alpha\grad f({\bar p}))\right)$.
\item[(ii)] If \( {p} \) is not a stationary point, then
\[
\langle \grad f(p), \log_{p} {\cal P}_{C}(\exp_{p}(-\alpha \grad f(p))) \rangle < 0.
\]
In particular, if there exists \( {\bar \alpha} > 0 \) such that
\[
\langle \grad f(\bar p), \log_{\bar p} {\cal P}_{C}(\exp_{\bar p}(-{\bar \alpha} \grad f(\bar p))) \rangle \geq 0,
\]
then \( \bar{p} \) is a stationary point.
\end{enumerate}
\end{proposition}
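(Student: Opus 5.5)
Write $z:={\cal P}_C(\exp_{\bar p}(-\alpha\grad f(\bar p)))$ and $q:=\exp_{\bar p}(-\alpha\grad f(\bar p))$, so that $\log_{\bar p}q=-\alpha\grad f(\bar p)$. The two tools are the variational characterization of the projection in Lemma~\ref{Lem:nonexp.int.neg}(iii) and the descent-type inequality \eqref{eq;snlpf} of Lemma~\ref{le:coslawap}.

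For (i), suppose first that $\bar p=z$. Then Lemma~\ref{Lem:nonexp.int.neg}(iii) with base point ${\cal P}_C(q)=\bar p$ gives
\[
\langle -\alpha\grad f(\bar p),\log_{\bar p}p\rangle\le 0 \quad\text{for all } p\in C.
\]
Dividing by $-\alpha<0$ yields exactly the stationarity inequality of Definition~\ref{def:sta.point}.

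Conversely, suppose $\bar p$ is stationary. Apply \eqref{eq;snlpf} with $p=\bar p\in C$ and $v=\grad f(\bar p)$:
\[
\langle \grad f(\bar p),\log_{\bar p}z\rangle\le -\tfrac1\alpha d^2(\bar p,z).
\]
Since $z\in C$, stationarity makes the left-hand side nonnegative. Hence $d^2(\bar p,z)\le 0$, so $z=\bar p$.

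For (ii), take a nonstationary $p\in C$ and set $z_p:={\cal P}_C(\exp_p(-\alpha\grad f(p)))$. Inequality \eqref{eq;snlpf} gives
\[
\langle \grad f(p),\log_p z_p\rangle\le -\tfrac1\alpha d^2(p,z_p).
\]
By (i) applied at $p$, nonstationarity means $z_p\ne p$. Therefore $d(p,z_p)>0$ and the inner product is strictly negative.

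Two points in the hypotheses need care. First, the assumption $\grad f\ne0$ is not actually needed in the argument above, so applying (i) at $p$ does not require it. Second, (i) is stated only for points of $C$, so $p$ must be taken in $C$ for Lemma~\ref{le:coslawap} to apply; I will assume $p\in C$ throughout (ii).

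The "in particular" part of (ii) is the contrapositive of the main statement with $\bar\alpha$ in place of $\alpha$. If the inner product is $\ge0$ for some $\bar\alpha>0$, then $\bar p$ cannot be nonstationary. Alternatively, \eqref{eq;snlpf} directly forces $d(\bar p,z)=0$, and (i) then gives stationarity.

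The argument involves no real obstacle. The only delicate point is the sign bookkeeping when dividing by $-\alpha$ in the first direction of (i).
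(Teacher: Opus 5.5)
Your proof is correct and follows the paper's argument essentially step for step. In (i) you use \eqref{eq;snlpf} from Lemma~\ref{le:coslawap} for one direction and the variational inequality of Lemma~\ref{Lem:nonexp.int.neg}(iii) for the other; in (ii) you combine (i) with \eqref{eq;snlpf} and obtain the final claim as the contrapositive. Your side remarks are also accurate: the hypothesis $\grad f(\bar p)\neq 0$ is never used, and $p$ must lie in $C$ in (ii) for the stationarity notion and Lemma~\ref{le:coslawap} to apply.
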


\begin{proof}
To prove item~$(i)$, first assume that \( \bar p \in C \) is a stationary point of problem~\eqref{eq:OptP}; that is,
\[
\langle \grad f(\bar p), \log_{\bar p} p \rangle \geq 0, \quad \forall p \in C.
\]
Applying this inequality to 
$
p = {\cal P}_C\!\left(\exp_{\bar p}(-\alpha \grad f(\bar p))\right)
$
and using Lemma~\ref{le:coslawap} with \( v = \grad f(\bar p) \) and $p=\bar{p}$, we obtain
\[
0 
\le 
\left\langle \grad f(\bar p), 
\log_{\bar p} {\cal P}_C\!\left(\exp_{\bar p}(-\alpha \grad f(\bar p))\right) \right\rangle
\le 
-\frac{1}{\alpha}\,
d^2\!\left(\bar p, {\cal P}_C\!\left(\exp_{\bar p}(-\alpha \grad f(\bar p))\right)\right)
\le 0.
\]
Hence,
$
d\!\left(\bar p, {\cal P}_C\!\left(\exp_{\bar p}(-\alpha \grad f(\bar p))\right)\right) = 0,
$
which implies that
$
\bar p = {\cal P}_C\!\left(\exp_{\bar p}(-\alpha \grad f(\bar p))\right).
$
Conversely, suppose that 
$
{\bar p} = {\cal P}_C(\exp_{\bar p}(-\alpha \grad f(\bar p))).
$
Using this and Lemma~\ref{Lem:nonexp.int.neg} $(iii)$ with \( q = \exp_{\bar p}(-\alpha \grad f(\bar p)) \), we obtain
\[
\langle -\alpha \grad f(\bar p), \log_{\bar p} p \rangle  
= \langle \log_{\bar p}(\exp_{\bar p}(-\alpha \grad f(\bar p))), \log_{\bar p} p \rangle 
\leq 0, \quad \forall p \in C.
\]
Dividing by \( -\alpha \) yields  
$\langle \grad f(\bar p), \log_{\bar p} p \rangle \ge 0$ for all $p \in C$,
and thus \( \bar p \) is stationary. For item~$(ii)$, let \( p \) be a non-stationary point.  
By item~$(i)$, this means
$
p \neq {\cal P}_C(\exp_p(-\alpha \grad f(p))).
$
Using Lemma~\ref{le:coslawap} with $v=\grad f(p)$, it follows that 
\[
0 < \frac{1}{\alpha} d^2\left(p, {\cal P}_C\left(\exp_p(-\alpha \grad f(p))\right)\right) 
\leq - \langle \grad f(p), \log_{p} {\cal P}_C\left(\exp_p(-\alpha \grad f(p))\right) \rangle,
\]
which proves the first part of item~$(ii)$.  
Finally, the second part of item~$(ii)$ follows directly from the contrapositive of the first part.
\end{proof}

%---------------------------------------------------------------------------------------------------------------
\section{Projected Gradient  Method on Hadamard Manifolds} \label{sec:socop}
In this section, we propose a    projected gradient method on Hadamard manifolds (PGMHM) for solving problem~\eqref{eq:OptP}. The method combines Riemannian gradient steps with metric projections onto the feasible set \(C\). We consider both constant and backtracking step-size strategies. we show that every accumulation point of the generated sequences is stationary and derive iteration-complexity bounds.

%---------------------------------------------------------------------------------------------------------------
\subsection{Projected Gradient  Method with Constant Step Size}\label{sec:Alg1}

We first consider the PGMHM with a constant step-size. Under a Lipschitz continuity assumption on the gradient vector field, the method admits a sufficient descent property that yields monotonic decrease of the objective function and provides a natural measure of first-order stationarity. As a consequence, either the algorithm terminates at a stationary point in finitely many iterations or every accumulation point of the generated sequence is stationary. 

Throughout this subsection, the following regularity condition will be needed:
\begin{itemize}
\item[{\bf (A1)}] The gradient vector field \( \grad f \) is Lipschitz continuous on \( C \) with constant \( L >0 \).
\end{itemize}

The projected gradient method with constant step size for solving problem~\eqref{eq:OptP} is stated as follows.

\begin{algorithm}[H]
\caption{Riemannian Projected Gradient Method with Constant Step Size}\label{Alg1s}
	\begin{algorithmic}[1]
		\State Choose \(0 < \alpha < 2/L\) and an initial point \( p_0 \in C \). Set \( k = 0 \).
		\State  Compute
		\begin{equation}\label{step.arm}
		p_{k+1} := {\cal P}_C(y_k)
		\qquad	y_k := \exp_{p_k}\!\left(- \alpha \grad f(p_k)\right),	
		\end{equation}
		\State  If \( p_{k+1}=p_k\), then \textbf{stop} and return \( p_{k+1} \). Otherwise, set \( k \leftarrow k + 1 \) and go to Step~\textbf{2}.
	\end{algorithmic}
\end{algorithm}

In this subsection, we assume that the algorithm does not stop, that is,
$p_{k+1} \neq p_k$, for all
$k\in\mathbb{N}$.
Consequently, \(\grad f(p_k)\neq 0\) for all \(k\in\mathbb{N}\).

\begin{lemma}\label{Le:MainConvP}
Assume that \textbf{(A1)} holds, and let $\{p_k\} \subseteq C$ be the sequence generated by Algorithm~\ref{Alg1s}. Then
\begin{equation}\label{eq:MainIneqP}
    f(p_{k+1}) 
    \le f(p_k) - \Gamma \, d^2(p_k, p_{k+1}),
    \qquad 
    \Gamma := \frac{2 - \alpha L}{2\alpha} > 0,
\end{equation}
for all $k \in \mathbb{N}$. In particular, the sequence $\{ f(p_k)\}$ is nonincreasing and convergent.
\end{lemma}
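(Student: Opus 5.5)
The plan is to combine the descent estimate of Lemma~\ref{le:lc} with the projection inequality \eqref{eq;snlpf} of Lemma~\ref{le:coslawap}. Since $p_k\in C$ (by induction, because $p_0\in C$ and each $p_{k+1}={\cal P}_C(y_k)\in C$), both $p_k$ and $p_{k+1}$ lie in the convex set $C$. Under \textbf{(A1)}, Lemma~\ref{le:lc} therefore applies with $p=p_k$, $q=p_{k+1}$ and gives
\[
f(p_{k+1})\le f(p_k)+\langle \grad f(p_k),\log_{p_k}p_{k+1}\rangle+\frac{L}{2}d^2(p_k,p_{k+1}).
\]

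Next, I will bound the inner product term. Apply \eqref{eq;snlpf} with $p=p_k$, $v=\grad f(p_k)$ and the stepsize $\alpha$ of Algorithm~\ref{Alg1s}. Because $p_{k+1}={\cal P}_C(\exp_{p_k}(-\alpha\grad f(p_k)))$, this yields
\[
\langle \grad f(p_k),\log_{p_k}p_{k+1}\rangle\le-\frac1\alpha d^2(p_k,p_{k+1}).
\]
Substituting this into the previous display gives
\[
f(p_{k+1})\le f(p_k)-\Bigl(\frac1\alpha-\frac L2\Bigr)d^2(p_k,p_{k+1}),
\]
and $\frac1\alpha-\frac L2=\frac{2-\alpha L}{2\alpha}=\Gamma$. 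The choice $0<\alpha<2/L$ ensures $\Gamma>0$.

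For the final assertion, $\Gamma>0$ makes $\{f(p_k)\}$ nonincreasing. Moreover, $f(p_k)\ge f^*>-\infty$ because every $p_k\in C$ and $f^*$ was assumed finite. A monotone sequence that is bounded below converges.

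I expect no genuine obstacle here. The only point needing care is to check that both $p_k$ and $p_{k+1}$ lie in $C$, so that Lemma~\ref{le:lc}, which requires $\grad f$ to be Lipschitz only on $C$, can be applied legitimately.
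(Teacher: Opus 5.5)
Your proposal is correct and matches the paper's proof: Lemma~\ref{le:lc} at $(p_k,p_{k+1})$ combined with the projection inequality of Lemma~\ref{le:coslawap}. The only difference is cosmetic. The paper writes $\grad f(p_k)=-\alpha^{-1}\log_{p_k}y_k$ and invokes \eqref{eq:dlcf} with $q=y_k$, whereas you apply the equivalent form \eqref{eq;snlpf} directly.
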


\begin{proof}
Fix any \( k \in \mathbb{N} \). Since \( f \) has an \( L \)-Lipschitz continuous gradient, Lemma~\ref{le:lc}, applied with \( p = p_k \) and \( q = p_{k+1} \), together with the definition of \( y_k \) in \eqref{step.arm}, yields
\begin{align*}
f(p_{k+1})
&\le f(p_k) 
    + \left\langle \grad f(p_k), \log_{p_k} p_{k+1} \right\rangle
    + \frac{L}{2} \, d^{2}(p_k, p_{k+1}) \\
&= f(p_k) 
    - \frac{1}{\alpha} 
      \left\langle \log_{p_k} y_k, \log_{p_k} p_{k+1} \right\rangle
    + \frac{L}{2} \, d^{2}(p_k, p_{k+1}).
\end{align*}
On the other hand, using the first equality in \eqref{step.arm} and applying Lemma~\ref{le:coslawap} with  
\( p = p_k \) and \( q = y_k \), we obtain
\begin{equation*}
-\left\langle \log_{p_k} y_k, \log_{p_k} p_{k+1} \right\rangle
= 
-\left\langle \log_{p_k} y_k, \log_{p_k} {\cal P}_C(y_k) \right\rangle
\le -\, d^{2}(p_k, {\cal P}_C(y_k))
= -\, d^{2}(p_k, p_{k+1}).
\end{equation*}
Therefore, combining the inequalities above and using the definition of 
\( \Gamma \) in \eqref{eq:MainIneqP}, we obtain
\begin{align*}
f(p_{k+1}) 
&\le f(p_k) 
    - \frac{1}{\alpha} d^2(p_k, p_{k+1}) 
    + \frac{L}{2} d^2(p_k, p_{k+1}), \\
&= f(p_k) 
    - \left( \frac{2 - \alpha L}{2\alpha} \right)
      d^2(p_k, p_{k+1}) , \\
      &= f(p_k) 
    - \Gamma
      d^2(p_k, p_{k+1}).
\end{align*}
Since \(0 < \alpha < 2/L\), we have \( \Gamma > 0 \), and therefore the sequence 
\( \{f(p_k) \} \) is nonincreasing.
Furthermore, because \( f^* > -\infty \), it follows that \( \{f(p_k)\} \) converges, 
which completes the proof.
\end{proof}

We now show that every accumulation point of the sequence generated by Algorithm~\ref{Alg1s} is stationary for \eqref{eq:OptP}.

\begin{theorem}\label{teo.Main}
Assume that~\textbf{(A1)} holds, and let \(\{p_k\}\subseteq C\) be the sequence generated by Algorithm~\ref{Alg1s}. Then every accumulation point \(\bar p\) of \(\{p_k\}\) is a stationary point of Problem~\eqref{eq:OptP}.
\end{theorem}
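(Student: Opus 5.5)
The argument combines the sufficient-descent inequality of Lemma~\ref{Le:MainConvP} with continuity of the projection, and then applies the fixed-point characterization of Proposition~\ref{pr:ProjProperty}(i).

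\emph{Step 1: asymptotic regularity.} Summing \eqref{eq:MainIneqP} over $k=0,\dots,N$ gives
\[
\Gamma\sum_{k=0}^{N} d^2(p_k,p_{k+1})\le f(p_0)-f(p_{N+1})\le f(p_0)-f^*.
\]
Since $f^*>-\infty$ and $\Gamma>0$, the series $\sum_k d^2(p_k,p_{k+1})$ converges. Hence $d(p_k,p_{k+1})\to 0$.

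\emph{Step 2: passage to the limit.} Let $\bar p$ be an accumulation point and $\{p_{k_j}\}$ a subsequence converging to it. Since $C$ is closed, $\bar p\in C$. Because $d(p_{k_j},p_{k_j+1})\to0$, the shifted subsequence also satisfies $p_{k_j+1}\to\bar p$. Next, $f$ is $C^1$, so $\grad f$ is a continuous vector field. The map $(p,v)\mapsto \exp_p(v)$ is continuous on $T\mathbb M$. Therefore
\[
y_{k_j}=\exp_{p_{k_j}}\bigl(-\alpha\grad f(p_{k_j})\bigr)\to \exp_{\bar p}\bigl(-\alpha\grad f(\bar p)\bigr)=:\bar y.
\]
By Lemma~\ref{Lem:nonexp.int.neg}(ii), ${\cal P}_C$ is continuous, so $p_{k_j+1}={\cal P}_C(y_{k_j})\to{\cal P}_C(\bar y)$. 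Uniqueness of limits then gives $\bar p={\cal P}_C\bigl(\exp_{\bar p}(-\alpha\grad f(\bar p))\bigr)$.

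\emph{Step 3: conclusion.} If $\grad f(\bar p)=0$, then $\bar p$ is trivially stationary by Definition~\ref{def:sta.point}. Otherwise, Proposition~\ref{pr:ProjProperty}(i) applies and shows that $\bar p$ is stationary. Alternatively, one can avoid the case split by applying Lemma~\ref{Lem:nonexp.int.neg}(iii) directly at the fixed point: it gives $\langle -\alpha\grad f(\bar p),\log_{\bar p}p\rangle\le0$ for all $p\in C$, which is exactly stationarity.

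\emph{Main technical point.} The only step needing care is the continuity of $(p,v)\mapsto\exp_p(v)$ composed with the gradient field, used in Step 2. This is standard, since the exponential map is smooth on the tangent bundle. One can also argue intrinsically: the paper's stated continuity $\log_{p_k}q_k\to\log_{\bar p}\bar q$, together with the identity $\log_{p_k}y_k=-\alpha\grad f(p_k)$, yields convergence of the $y_k$ along the subsequence.
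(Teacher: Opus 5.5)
Your proof is correct and follows essentially the same route as the paper: the descent inequality gives $d(p_{k_j},p_{k_j+1})\to0$, continuity of $\exp$, $\grad f$ and ${\cal P}_C$ gives the fixed-point identity $\bar p={\cal P}_C(\exp_{\bar p}(-\alpha\grad f(\bar p)))$, and Proposition~\ref{pr:ProjProperty}(i) then yields stationarity; your alternative of applying Lemma~\ref{Lem:nonexp.int.neg}(iii) directly is also valid and removes the case split. One caveat concerns only your closing aside, not the main proof: the stated continuity of $\log$ assumes the $y_{k_j}$ already converge, so it cannot establish that convergence without an extra boundedness/compactness argument, whereas your main argument via continuity of $\exp$ on $T\mathbb M$ does not have this problem.
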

\begin{proof}
If \( \grad f(\bar p) = 0 \), then the stationarity condition, see Definition\,\ref{def:sta.point}, is trivially satisfied, and therefore \( \bar p \) is a stationary point. 
Assume now that \( \grad f(\bar p) \neq 0 \). 
Let \( \{ p_{k_j}\} \) be a subsequence such that \( p_{k_j} \to \bar p \). 
From Lemma~\ref{Le:MainConvP}, we have
\begin{equation}\label{eq:eK1}
d^{2}(p_{k_j}, p_{k_j+1})
\le \frac{1}{\Gamma}\bigl(f(p_{k_j}) - f(p_{k_j+1})\bigr),
\qquad \forall j \in \mathbb{N}.
\end{equation}
Since Lemma~\ref{Le:MainConvP} also guarantees that the sequence \( \{f(p_k)\} \) converges, the right-hand side of \eqref{eq:eK1} tends to zero as \( j \to + \infty \). 
Consequently,
$\lim_{j \to \infty} d(p_{k_j}, p_{k_j+1}) = 0.$
Because \( p_{k_j} \to \bar p \), it follows that \( p_{k_j+1} \to \bar p \) as well. Next, using \eqref{step.arm} together with the continuity of the exponential map, of the gradient field, and of the metric projection onto \( C \) (see Lemma~\ref{Lem:nonexp.int.neg} $(ii)$), we obtain
\[
\bar p 
= \lim_{j \to \infty} p_{k_j+1}
= \lim_{j \to \infty} P_C\!\left(\exp_{p_{k_j}}(-\alpha \grad f(p_{k_j}))\right)
= {\cal P}_C\!\left(\exp_{\bar p}(-\alpha\, \grad f(\bar p))\right).
\]
By item~$(i)$ of Proposition~\ref{pr:ProjProperty}, this identity implies that \( \bar p \) is a stationary point of \eqref{eq:OptP}, which completes the proof.
\end{proof}

Item~$(i)$ of Proposition~\ref{pr:ProjProperty} states that if \(p_k={\cal P}_C\!\left(\exp_{p_k}(-\alpha\grad f(p_k))\right)\), then \(p_k\) is a stationary point of~\eqref{eq:OptP}. Consequently, the distance between \(p_k\) and \({\cal P}_C\!\left(\exp_{p_k}(-\alpha\grad f(p_k))\right)\), namely \(d(p_k,p_{k+1})\), may be interpreted as a measure of the stationarity of \(p_k\). The next theorem provides an iteration-complexity bound for this quantity.

\begin{theorem}\label{Teo:FCompP}
Assume that \textbf{(A1)} holds, and let  
\(\{p_k\} \subseteq C\) be the sequence generated by  
Algorithm~\ref{Alg1s}.  
Then, for every \(N \in \mathbb{N}\), the following inequality holds:
\[
\min\left\{ d(p_k, p_{k+1}) : k = 0, 1, \ldots, N \right\}
\leq \frac{\sqrt{f(p_0) - f^*}}{\sqrt{\Gamma (N+1)}}.
\]
\end{theorem}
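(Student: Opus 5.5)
The plan is the standard telescoping argument built on the sufficient-descent inequality of Lemma~\ref{Le:MainConvP}.

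First, apply \eqref{eq:MainIneqP} for each \(k=0,1,\ldots,N\). This gives
\[
\Gamma\, d^2(p_k,p_{k+1}) \le f(p_k)-f(p_{k+1}).
\]
Summing these inequalities over \(k=0,\ldots,N\), the right-hand side telescopes to \(f(p_0)-f(p_{N+1})\).

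Since the whole sequence lies in \(C\), we have \(f(p_{N+1})\ge f^*\). Therefore
\[
\Gamma\sum_{k=0}^{N} d^2(p_k,p_{k+1}) \le f(p_0)-f^*.
\]

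Next, bound the sum from below by \((N+1)\) times its smallest term:
\[
(N+1)\min\{d^2(p_k,p_{k+1}) : k=0,\ldots,N\}\le \sum_{k=0}^{N} d^2(p_k,p_{k+1}).
\]
Combining this with the previous display and using \(\Gamma>0\) yields
\[
\min\{d^2(p_k,p_{k+1}) : k=0,\ldots,N\}\le \frac{f(p_0)-f^*}{\Gamma(N+1)}.
\]

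Finally, take square roots. This is legitimate because the map \(t\mapsto\sqrt t\) is monotone on \([0,\infty)\), so it commutes with taking the minimum: \(\min_k d(p_k,p_{k+1})=\sqrt{\min_k d^2(p_k,p_{k+1})}\). This gives the claimed bound.

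There is no genuine obstacle here. The only points to check are that \(f^*>-\infty\) (which is assumed in Section~\ref{sec;opt.cond}) and that \(p_{N+1}\in C\), which holds because \(p_{N+1}\) is produced by the projection \({\cal P}_C\) in \eqref{step.arm}.
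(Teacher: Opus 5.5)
Your proposal is correct and matches the paper's proof: sum the sufficient-descent inequality \eqref{eq:MainIneqP} over $k=0,\ldots,N$, telescope, bound $f(p_{N+1})\ge f^*$, use that $(N+1)$ times the minimum is at most the sum, and take square roots. Your remarks that $p_{N+1}\in C$ and that $\sqrt{\cdot}$ commutes with the minimum are details the paper leaves implicit.
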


\begin{proof}
Fix \(N \in \mathbb{N}\). Rearranging the terms in inequality~\eqref{eq:MainIneqP}, we obtain
\[
d^2(p_k, p_{k+1}) 
\leq \frac{1}{\Gamma}\big( f(p_k) - f(p_{k+1}) \big),
\qquad \forall\, k = 0,1,\ldots.
\]
Summing both sides from \(k = 0\) to \(k = N\), and recalling that  
\(f^* > -\infty\) denotes the optimal value of Problem~\eqref{eq:OptP}, we find
\[
\sum_{k=0}^{N} d^2(p_k, p_{k+1})
\leq \frac{1}{\Gamma} \sum_{k=0}^{N} \big( f(p_k) - f(p_{k+1}) \big)
= \frac{1}{\Gamma}\big( f(p_0) - f(p_{N+1}) \big)
\leq \frac{1}{\Gamma}\big( f(p_0) - f^* \big).
\]
Therefore,
\[
(N+1)\,
\min\left\{ d^2(p_k, p_{k+1}) : k = 0,\ldots,N \right\}
\leq 
\sum_{k=0}^{N} d^2(p_k, p_{k+1})
\leq 
\frac{1}{\Gamma}\big( f(p_0) - f^* \big),
\]
which immediately yields the claimed bound.
\end{proof}
The complexity estimate of Theorem~\ref{Teo:FCompP} also yields the following asymptotic regularity property of the sequence generated by Algorithm~\ref{Alg1s}.
\begin{corollary}
Under the assumptions of Theorem~\ref{Teo:FCompP}, the successive displacements generated by Algorithm~\ref{Alg1s} satisfy
\[
\sum_{k=0}^{+\infty} d^2(p_k,p_{k+1})\le \frac{f(p_0)-f^*}{\Gamma}<+\infty.
\]
Consequently, \(d(p_{k+1},p_k)\to0\) as \(k\to +\infty\).
\end{corollary}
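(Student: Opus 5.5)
The plan is to reuse the telescoping argument already contained in the proof of Theorem~\ref{Teo:FCompP}, but without dividing by \(N+1\).

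\textbf{Step 1.} Fix \(N\in\mathbb N\). By the sufficient-descent inequality \eqref{eq:MainIneqP} of Lemma~\ref{Le:MainConvP}, we have \(d^2(p_k,p_{k+1})\le \Gamma^{-1}\bigl(f(p_k)-f(p_{k+1})\bigr)\) for every \(k\). This uses \(\Gamma>0\), which holds because \(0<\alpha<2/L\).

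\textbf{Step 2.} Sum this inequality from \(k=0\) to \(N\). The right-hand side telescopes to \(\Gamma^{-1}\bigl(f(p_0)-f(p_{N+1})\bigr)\). Since \(p_{N+1}\in C\), we have \(f(p_{N+1})\ge f^*\), and \(f^*>-\infty\) by standing assumption. Hence
\[
\sum_{k=0}^{N} d^2(p_k,p_{k+1})\le \frac{f(p_0)-f^*}{\Gamma}.
\]

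\textbf{Step 3.} The partial sums are nondecreasing in \(N\) and bounded above by a constant independent of \(N\). Letting \(N\to+\infty\) therefore gives the stated bound on the full series.

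\textbf{Step 4.} The general term of a convergent series of nonnegative terms tends to zero, so \(d^2(p_k,p_{k+1})\to0\). Taking square roots and using the symmetry of \(d\), we obtain \(d(p_{k+1},p_k)\to0\).

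There is no real obstacle here. The only points to check are that \(f(p_{N+1})\ge f^*\), which uses feasibility of the iterates (guaranteed because \({\cal P}_C\) maps into \(C\)), and that \(f^*\) is finite.
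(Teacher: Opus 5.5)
Your proposal is correct and follows the paper's own argument: the paper likewise sums the sufficient-descent inequality \eqref{eq:MainIneqP}, bounds the telescoped sum by \((f(p_0)-f^*)/\Gamma\) as in the proof of Theorem~\ref{Teo:FCompP}, and lets \(N\to+\infty\). Your explicit checks that \(f(p_{N+1})\ge f^*\) by feasibility and that the general term of a convergent nonnegative series vanishes just spell out the steps the paper calls immediate.
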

\begin{proof}
Letting \(N\to +\infty\) in the summability estimate established in the proof of Theorem~\ref{Teo:FCompP} gives the first assertion, and the second follows immediately.
\end{proof}

%---------------------------------------------------------------------------------------------------------------
\subsection{Projected Gradient  Method with Backtracking Step Size}\label{sec:Alg2}

Algorithm~\ref{Alg1s} requires knowledge of a Lipschitz constant for the gradient vector field in order to determine the step size. In practice, however, such a constant may be unavailable or difficult to estimate. To overcome this limitation, we consider in this section a variant of Algorithm~\ref{Alg1s} in which the step size is selected by means of a backtracking procedure. The resulting PGMHM for solving Problem~\eqref{eq:OptP} is stated below.

\begin{algorithm}[H]
\caption{Riemannian Projected Gradient  Method with Backtracking}\label{AlgsIP}
\begin{algorithmic}[1]
\State Choose constants $\rho,\beta\in(0,1)$ and $t_{\min}, t_{\max}>0$.
Select an initial point $p_0\in C$ and set $k\gets 0$.
\State Choose a step-size $t_k$ such that
\begin{equation}\label{eq:coaIP}
0<t_{\min}\le t_k\le  t_{\max} .
\end{equation}
\State Compute
\begin{equation}\label{eq:cpIP}
y_k:=\exp_{p_k}\!\bigl(-t_k\,\grad f(p_k)\bigr),\qquad
z_k:={\cal P}_C(y_k).
\end{equation}
\State   If $z_k=p_k$, then \textbf{stop} and return \( p_{k} \). Otherwise, choose any trial step-size $\bar\lambda_k\in(0,1]$ and compute
\begin{equation}\label{eq:jkIP}
\ell_k:=\min\Bigl\{\ell\in\mathbb N:\ 
f\bigl(\exp_{p_k}(\beta^\ell\bar\lambda_k\,v_k)\bigr)\le
f(p_k)+\rho\,\beta^\ell\bar\lambda_k\,\langle \grad f(p_k),v_k\rangle
\Bigr\}, 
\end{equation}
where  $v_k:=\log_{p_k}z_k$.
\State Set 
\begin{equation}\label{eq:pk+1thet}
\lambda_k:=\beta^{\ell_k}\bar\lambda_k, \quad p_{k+1}:=\exp_{p_k}(\lambda_k\,v_k),
\end{equation}
and $k\gets k+1$, and return to Step~2.
\end{algorithmic}
\end{algorithm}

After the stopping criterion is checked, Algorithm~\ref{AlgsIP} computes an intrinsic gradient step from
\(p_k\) by moving along the geodesic generated by the exponential map, yielding the unconstrained trial point
$
y_k=\exp_{p_k}\bigl(-t_k\,\grad f(p_k)\bigr).
$
This point is subsequently projected onto the feasible set,
$
z_k={\cal P}_C(y_k)\in C.
$
The vector
$
v_k=\log_{p_k}z_k
$
represents the initial velocity of the unique minimizing geodesic joining \(p_k\) and \(z_k\), and may therefore be interpreted as a projected descent direction.
The next iterate is obtained through a backtracking line-search procedure along the feasible geodesic segment
$
s\mapsto \exp_{p_k}(s\,v_k).
$
Starting from an initial trial steplength \(\bar{\lambda}_k\in(0,1]\), the algorithm successively reduces the steplength by a factor \(\beta\in(0,1)\) until the Armijo condition~\eqref{eq:jkIP} is satisfied. The accepted steplength is given by
$
\lambda_k=\beta^{\ell_k}\bar{\lambda}_k\in(0,1],
$
and the new iterate is defined by
$
p_{k+1}=\exp_{p_k}(\lambda_k v_k).
$
Since \(z_k\in C\) and \(v_k=\log_{p_k}z_k\), the curve
$
\gamma_k(s):=\exp_{p_k}(s\,v_k)$, $s\in[0,1]$,
is the  geodesic connecting \(p_k\) to \(z_k\). As \(C\) is convex, then
$
\gamma_k([0,1])\subset C,
$
which implies that every trial point of the form
$
\exp_{p_k}(\beta^\ell \bar{\lambda}_k v_k)
$
remains feasible. Consequently, the update rule~\eqref{eq:pk+1thet} preserves feasibility, and the entire sequence \(\{p_k\}\) generated by Algorithm~\ref{AlgsIP} remains in \(C\).

For the convergence analysis developed in this section, we impose the following assumption:

\begin{itemize}
\item[{\bf (A2)}]
$
\inf_{k\in\mathbb N}\bar\lambda_k
=: \lambda^{\star}
\in (0,1].
$
\end{itemize}

Assumption {\bf (A2)} can be readily enforced in practice. Specifically, given any constant $\underline{\lambda} \in(0,1]$, one may restrict the trial step-sizes to satisfy
$\bar\lambda_k\in[\underline{\lambda},1]$ for all $ k\in\mathbb N$.
Consequently,
$
\inf_{k\in\mathbb N}\bar\lambda_k
\geq
\underline{\lambda}>0,
$
and therefore  {\bf (A2)} is satisfied with
$\lambda^{\star}=\underline{\lambda}$. This approach will be employed in some of the trial step-size selection rules introduced in the next section.

%%%%%%%%%%%%%%%%%%%%%%%%%%%%%%%%%%
\subsubsection{Choice of the trial step-size}\label{sec:lk}
%%%%%%%%%%%%%%%%%%%%%%%%%%%%%%%%%%

We propose four strategies for choosing the trial step-size $\bar{\lambda}_k$ in the backtracking rule~\eqref{eq:jkIP} of Algorithm~\ref{AlgsIP} and show that each of them satisfies Assumption~{\bf (A2)}.

\begin{enumerate}[label={(\textbf{St\arabic*})}, ref={(St\arabic*)}]
\item \label{it:ArmijoIP}
({\it Armijo trial step-size})
Set ${\bar \lambda}_k:=1$ for all $k\in\mathbb N$ and take $\beta\in(0,1)$.
Then \eqref{eq:jkIP} reduces to the classical Armijo backtracking rule along the feasible geodesic
$s\mapsto \exp_{p_k}(s\,v_k)$:
\begin{equation*}\label{eq:TkArmlkIP}
\ell_k:=\min\Bigl\{\ell\in\mathbb N:\ 
f\bigl(\exp_{p_k}(\beta^\ell v_k)\bigr)\le f(p_k)+\rho\,\beta^\ell\langle \grad f(p_k),v_k\rangle
\Bigr\},
\end{equation*}
and the accepted step-size is $\lambda_k:=\beta^{\ell_k}$. Since $\bar\lambda_k=1$ for every $k\in\mathbb N$, we have
$\inf_{k\in\mathbb N}\bar\lambda_k=1$, and hence  {\bf (A2)} holds automatically.

\item \label{it:AdaptLIP}
({\it Adaptive trial step-size via a geometric Lipschitz estimate})
Assume that Assumption~\textbf{(A1)} holds, i.e.,   $\grad f$ is $L$-Lipschitz on  $C$.
Fix $\rho,\beta\in(0,1)$ as in Algorithm~\ref{AlgsIP} and choose an initial estimate $L_0\ge 1$; set $L_{-1}:=L_0$.
At iteration $k$, choose the trial step size in \eqref{eq:jkIP} as
$
\bar\lambda_k:= 1/L_{k-1} \in(0,1],
$
compute $\ell_k$ by \eqref{eq:jkIP}, set the accepted step-size $\lambda_k:=\beta^{\ell_k}\bar\lambda_k$,
and update $p_{k+1}:=\exp_{p_k}(\lambda_k v_k)$.
Finally, update the estimate by
$
L_k:=\beta^{-\ell_k}L_{k-1}.
$
Note that
\begin{equation*} \label{eq:defclip}
\bar\lambda_{k+1}:=\frac{1}{L_k}=\frac{1}{\beta^{-\ell_k}L_{k-1}}=\beta^{\ell_k}\frac{1}{L_{k-1}}=\beta^{\ell_k}\bar\lambda_k=\lambda_k.
\end{equation*} 
that is, the next trial step-size is exactly the previously accepted one. Next we will compute a lower bound for the trials. For that, fix $k\in\mathbb N$ and denote by $s:=\beta^\ell\bar\lambda_k\in(0,1]$ a generic candidate tested in
\eqref{eq:jkIP}. By Lemma~\ref{le:lc} along the feasible
geodesic $s\mapsto \exp_{p_k}(s\,v_k)$, we have 
\begin{equation}\label{eq:lbtrials-quad}
f\bigl(\exp_{p_k}(s\,v_k)\bigr)
\le
f(p_k)+s\langle \grad f(p_k),v_k\rangle+\frac{L}{2}s^2\|v_k\|^2,
\qquad s\in[0,1].
\end{equation}
Moreover, by using inequality \eqref{eq;snlpf} in Lemma~\ref{le:coslawap}  applied with $p=p_k$, $v=\grad f(p_k)$ and $\alpha=t_k$, we obtain
\begin{equation}\label{eq:lbtrials-proj}
\|v_k\|^2=d^2(p_k,z_k)\le -\,t_k\,\langle \grad f(p_k),v_k\rangle.
\end{equation}
Substituting \eqref{eq:lbtrials-proj} into \eqref{eq:lbtrials-quad} gives
\begin{equation}\label{eq:lbtrials-combined}
f\bigl(\exp_{p_k}(s\,v_k)\bigr)\leq f(p_k)+ s\left(1-\frac{L}{2}\,s\,t_k\right)\,\langle \grad f(p_k),v_k\rangle, \qquad s\in[0,1].
\end{equation}
Now recall that the Armijo condition in~\eqref{eq:jkIP} at the candidate $s$ is given by 
\begin{equation}\label{eq:lbtrials-armijo}
f\bigl(\exp_{p_k}(s\,v_k)\bigr)
\le
f(p_k)+\rho\,s\,\langle \grad f(p_k),v_k\rangle.
\end{equation}
If $p_k$ is not stationary, then $d(p_k,z_k)>0$ and \eqref{eq:lbtrials-proj} implies
$\langle \grad f(p_k),v_k\rangle<0$. Hence, whenever
\(
1-(L/2)\,s\,t_k\ \ge\ \rho, 
\)
equivalently,  
\begin{equation}\label{eq:lbtrials-small2}
s\ \le\ \frac{2(1-\rho)}{L\,t_k}.
\end{equation}
the right-hand side in \eqref{eq:lbtrials-combined} is no larger than the right-hand side in
\eqref{eq:lbtrials-armijo}, and therefore \eqref{eq:lbtrials-combined} guarantees \eqref{eq:lbtrials-armijo}.
This shows that the backtracking must terminate after finitely many reductions. To extract a uniform lower bound for the trial sequence $\{\bar\lambda_k\}$ under the present update
$\bar\lambda_{k+1}=\lambda_k$, consider two cases:

 {\it Case 1: $\ell_k=0$.}
 
Then $\lambda_k=\bar\lambda_k$ and, by construction, we  have 
$
\bar\lambda_{k+1}=\lambda_k=\bar\lambda_k,
$
so the trial value does not decrease at iteration $k$. 

{\it Case 2: $\ell_k\ge 1$.} 

Set the previous candidate as 
$
s_k^-:=\beta^{\ell_k-1}\bar\lambda_k=\lambda_k/\beta \in(0,1].
$
By minimality of $\ell_k$, the Armijo test fails at $s_k^-$, i.e., the following inequality holds 
\[
f\bigl(\exp_{p_k}(s_k^- v_k)\bigr)
>
f(p_k)+\rho\,s_k^-\,\langle \grad f(p_k),v_k\rangle.
\]
If $s_k^-=\lambda_k/\beta\leq 2(1-\rho)/(L t_k)$, then \eqref{eq:lbtrials-small2} would hold and, as shown above,
\eqref{eq:lbtrials-combined} would guarantee the Armijo inequality at $s_k^-$, a contradiction.
Therefore necessarily
\[
s_k^->\frac{2(1-\rho)}{L\,t_k}\ \ge\ \frac{2(1-\rho)}{L\,t_{\max}}.
\]
Multiplying by $\beta$ and using $\bar\lambda_{k+1}=\lambda_k=\beta s_k^-$ yields
$
\bar\lambda_{k+1}> 2\beta(1-\rho)/(L\,t_{\max}).
$

Combining the two cases and taking the initial value $\bar\lambda_0 = 1/L_0$, it follows that Assumption~{\bf (A2)} holds with
$
\lambda^{\star}
=
\min\!\left\{
1/{L_0},
\,
2\beta(1-\rho)/(t_{\max}\,L)
\right\}.
$

\item \label{it:AdaptIP}
({\it Adaptive trial step-size with a uniform lower bound})
Fix $\beta\in(0,1)$ and choose ${\bar\lambda}_0\in(0,1]$ and $\underline\lambda\in(0,{\bar\lambda}_0]$.
At iteration $k$, take the current trial step-size ${\bar\lambda}_k\in(0,1]$, compute $\ell_k$ by~\eqref{eq:jkIP}, set
$
\lambda_k:=\beta^{\ell_k}\bar\lambda_k\in(0,1]$, and update
$p_{k+1}:=\exp_{p_k}(\lambda_k\,v_k)$.
Then define the next trial step-size by
\begin{equation*}\label{eq:trial-update-IP-lb}
\bar\lambda_{k+1}
:=\max\Bigl\{\underline\lambda,\ \min\{1,\beta^{\ell_k-1}\bar\lambda_k\}\Bigr\}
=\max\Bigl\{\underline\lambda,\ \min\{1,\beta^{-1}\lambda_k\}\Bigr\}.
\end{equation*}
By construction, $\bar\lambda_k \in [\underline{\lambda},1] $, for all $k\in\mathbb{N}$,
which implies that Assumption~{\bf (A2)} holds.
Finally, since $\lambda_k\in(0,1]$ and $z_k={\cal P}_C(y_k)\in C$, if $C$ is convex then the whole
geodesic segment $s\mapsto \exp_{p_k}(s\,v_k)$, $s\in[0,1]$, is contained in $C$. Therefore every trial point
$\exp_{p_k}(\beta^\ell\bar\lambda_k\,v_k)$, as well as the accepted iterate $p_{k+1}$, is feasible.

\item \label{it:St3IP}
({\it Geometric trial-step-size update via a counter, with truncation})
Fix $\beta\in(0,1)$ and choose $\theta_0\in(0,1)$ and $\underline\lambda\in(0,\theta_0]$.
Initialize $s_0:=0$. In Algorithm~\ref{AlgsIP}, select the trial step-size at iteration $k$ as $\bar\lambda_k:=\theta_k\in(0,1)$.
Compute $\ell_k$, set the accepted step-size $
\lambda_k:=\beta^{\ell_k}\bar\lambda_k\in(0,1)$, and update $p_{k+1}:=\exp_{p_k}(\lambda_k\,v_k)$.
Update the counter and the next trial step-size by
\begin{equation*}
 \bar\lambda_{k+1}:=\theta_{k+1},   \quad \text{where}\quad  \theta_{k+1}:=
\begin{cases}
\max\{\underline\lambda,\ \beta^{s_{k+1}}\theta_0\}, & \text{if } \ell_k>0,\\[0.3em]
\theta_0, & \text{if } \ell_k=0,
\end{cases}
\qquad \text{and} \quad s_{k+1}:=s_k+\ell_k.
\end{equation*}
Then $\bar\lambda_k=\theta_k\in[\underline\lambda,\theta_0]\subset(0,1)$ for all $k$. Consequently, Assumption~{\bf (A2)} is satisfied. Furthermore, since $\bar\lambda_k\in(0,1]$ and $\lambda_k=\beta^{\ell_k}\bar\lambda_k\in(0,1]$, every trial point
$\exp_{p_k}(\beta^\ell \bar\lambda_k\,v_k)$ belongs to the geodesic segment connecting $p_k$ and $z_k$. Therefore, by the convexity of $C$, all function evaluations involved in~\eqref{eq:jkIP}, as well as the next iterate $p_{k+1}$, remain feasible.
\end{enumerate}

The trial-step-size rules above are all instances of the same backtracking template~\eqref{eq:jkIP}. They differ only in how the trial step-size $\bar\lambda_k\in(0,1]$ is selected before testing the geometric candidates $\beta^\ell\bar\lambda_k$. The trial step-size selection in \ref{it:ArmijoIP} corresponds to the fixed choice $\bar\lambda_k\equiv 1$, so the line search starts from the full feasible step along the geodesic segment $s\mapsto \exp_{p_k}(s\,v_k)$. The trial step-size selection in \ref{it:AdaptLIP} chooses $\bar\lambda_k$ from an estimate of a Lipschitz/curvature constant (e.g., $\bar\lambda_k=1/L_{k-1}\in(0,1]$) and updates this estimate after the backtracking test. Under the Lipschitz model, this rule yields $\inf_k\bar\lambda_k>0$ and thus also a uniform lower bound on the accepted step sizes. The trial step size selection in \ref{it:AdaptIP} is obtained by reusing the previously accepted step size and attempting one backtracking level larger; that is,
\[
\bar\lambda_k=\max\Bigl\{\underline\lambda,\ \min\{1,\beta^{-1}\lambda_{k-1}\}\Bigr\},
\qquad\text{with}\qquad
\lambda_{-1}:=\bar\lambda_0\in(0,1].
\]
In this way, we ensure that $\bar\lambda_k \in [\underline\lambda,1]$. Consequently, under rule (St3), we also have $\inf_k \bar\lambda_k > 0$. The trial step-size selection in \ref{it:St3IP} chooses $\bar\lambda_k=\theta_k$ and updates $\theta_{k+1}$ through a counter, with truncation at a prescribed $\underline\lambda>0$. This yields $\bar\lambda_k\in[\underline\lambda,1]$ for all $k$ and hence $\inf_k\bar\lambda_k>0$ by construction. Therefore, in all cases we have
$ \inf_k \bar\lambda_k > 0$ and $\sup_k \bar\lambda_k \leq 1$.
This guarantees that every trial point
$
\exp_{p_k}(\beta^\ell \bar{\lambda}_k v_k)
$
belongs to the geodesic segment joining \(p_k\) and \(z_k\). Since \(C\) is  convex and \(p_k,z_k\in C\), all trial points generated during the line-search procedure remain feasible.

Finally, Algorithm~\ref{AlgsIP} allows any choice of trial step-sizes satisfying $\bar\lambda_k\in(0,1]$, thereby providing greater flexibility than the standard restart strategy $\bar\lambda_k\equiv 1$. In favorable situations, this additional flexibility may lead to larger accepted step sizes and, consequently, to a reduction in the number of function evaluations required by the line-search procedure.

\subsubsection{Choice of step-size}\label{step-size}

We consider two rules for selecting the parameter \(t_k\). The first one is a
constant rule, in which
\[
    t_k \equiv t_0,
    \qquad k\geq 0,
\]
for some fixed \(t_0\in[t_{\min},t_{\max}]\). 
The second choice is based on a Riemannian Barzilai--Borwein rule, i.e., $ t_k    =  \min\left\{
        t_{\max},
        \max\left\{
            t_{\min},
            t_k^{\mathrm{BB}}
        \right\}
    \right\}
    $ where 

\begin{equation*}
t^{\mathrm{BB}}_{k} :=
\begin{cases}
\displaystyle
1,
&  \text{if }  k=0,\\[2ex]
\frac{
\langle s_{k-1},s_{k-1}\rangle
}{
\langle s_{k-1},y_{k-1}\rangle
},
& \text{if } \langle s_{k-1},y_{k-1}\rangle>0 \text{ and } k>0, \\[2ex]
t_{\max},
& \text{otherwise},
\end{cases}
\end{equation*}
with
$
    s_{k-1}:=-\log_{p_k}p_{k-1}$ and 
    $y_{k-1}
    :=
    \operatorname{grad} f(p_k)
    -
    P_{p_{k-1}p_k}
    \operatorname{grad} f(p_{k-1}),
$

Although this rule requires only first-order information, it implicitly incorporates second-order information through a scalar approximation of the Riemannian Hessian. This Riemannian Barzilai--Borwein strategy was introduced and studied for unconstrained optimization problems on Riemannian manifolds; see \cite{IannazzoPorcelli2018}.

%%%%%%%%%%%%%%%%%%%%%%%%%%%%%%%%%%
\subsubsection{Well-Definedness}\label{sec:wdef}  

In this section, we establish that Algorithm~\ref{AlgsIP} is well defined. In particular, whenever \(p_k\) is not a stationary point, the set defined in~\eqref{eq:jkIP} is nonempty, and hence \(\ell_k\), \(\lambda_k\), and \(p_{k+1}\) are well defined. Furthermore, since \(\bar{\lambda}_k \in (0,1]\), all trial points lie on the geodesic segment joining \(p_k\) and \(z_k \in C\). As \(C\) is convex, then \(p_{k+1}\in C\).

\begin{proposition}\label{pr:wdAstsIP}
Let $k\in\mathbb N$. Assume that $p_k\in C$ is not a stationary point of Problem~\eqref{eq:OptP}.
Then the iterate $p_{k+1}$ generated by Algorithm~\ref{AlgsIP} is well defined and belongs to $C$.
\end{proposition}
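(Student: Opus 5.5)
The argument has two parts. First I show that the set of indices in \eqref{eq:jkIP} is nonempty, so that $\ell_k$, $\lambda_k$ and $p_{k+1}$ exist. Then I show feasibility of $p_{k+1}$ using convexity of $C$.

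\textbf{Step 1: strict descent direction.} Since $p_k\in C$ is not stationary, Proposition~\ref{pr:ProjProperty}\,(i) with $\alpha=t_k>0$ gives $z_k={\cal P}_C(y_k)\neq p_k$. (If $\grad f(p_k)=0$, then $p_k$ would be trivially stationary, so the hypothesis $\grad f(p_k)\neq0$ of that proposition holds.) Consequently $v_k=\log_{p_k}z_k\neq 0$. Applying inequality \eqref{eq;snlpf} of Lemma~\ref{le:coslawap} with $p=p_k$, $v=\grad f(p_k)$ and $\alpha=t_k$ yields
\[
\langle \grad f(p_k),v_k\rangle\le -\frac{1}{t_k}\,d^2(p_k,z_k)<0 .
\]

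\textbf{Step 2: finite termination of backtracking.} This is the main point, and it must be done without assumption \textbf{(A1)}, using only continuous differentiability of $f$. Define $\varphi(s):=f(\exp_{p_k}(s v_k))$. Then $\varphi$ is $C^1$ and
\[
\varphi'(0)=\langle\grad f(p_k),v_k\rangle<0 .
\]
By the definition of the derivative,
\[
\frac{\varphi(s)-\varphi(0)}{s}\to\varphi'(0)<\rho\,\varphi'(0)\quad\text{as } s\downarrow 0,
\]
where the strict inequality uses $\rho<1$ and $\varphi'(0)<0$. Hence there is $\delta>0$ such that $\varphi(s)\le\varphi(0)+\rho s\varphi'(0)$ for all $s\in(0,\delta]$. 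Because $\bar\lambda_k\in(0,1]$ is fixed and $\beta\in(0,1)$, we have $\beta^\ell\bar\lambda_k\to0$. So for all sufficiently large $\ell$ the candidate $s=\beta^\ell\bar\lambda_k$ lies in $(0,\delta]$ and satisfies the Armijo inequality. The set in \eqref{eq:jkIP} is therefore a nonempty subset of $\mathbb N$, its minimum $\ell_k$ exists, and $\lambda_k=\beta^{\ell_k}\bar\lambda_k\in(0,1]$ is well defined.

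\textbf{Step 3: feasibility.} The curve $\gamma_k(s)=\exp_{p_k}(s v_k)$, $s\in[0,1]$, is the geodesic segment from $p_k$ to $z_k$. Both endpoints lie in $C$: $p_k$ by hypothesis and $z_k$ because it is a projection onto $C$. By convexity of $C$, the whole segment $\gamma_k([0,1])$ lies in $C$. Since $\lambda_k\in(0,1]$, it follows that $p_{k+1}=\gamma_k(\lambda_k)\in C$, which completes the proof.
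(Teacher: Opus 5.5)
Your proof is correct and follows essentially the same route as the paper. You obtain the descent property $\langle \grad f(p_k),v_k\rangle<0$ by unpacking Proposition~\ref{pr:ProjProperty}(ii) through item (i) and \eqref{eq;snlpf}, which is exactly how (ii) is proved. You then use the same first-order limit argument on $s\mapsto f(\exp_{p_k}(s v_k))$ with $\rho<1$ to get finite termination of the backtracking, and convexity of $C$ along the geodesic from $p_k$ to $z_k$ for feasibility. Your explicit check that $\grad f(p_k)\neq 0$, which Proposition~\ref{pr:ProjProperty} requires as a hypothesis, is a detail the paper leaves implicit.
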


\begin{proof}
Recall that $\rho,\beta\in(0,1)$ and that the Algorithm~\ref{AlgsIP} chooses a trial step-size ${\bar\lambda}_k\in(0,1]$. Because $C$ is closed and  convex, the metric projection
$z_k={\cal P}_C(y_k)$ is well defined and belongs to $C$, see Lemma~\ref{Lem:nonexp.int.neg}$(i)$.
Since $\mathbb M$ is a  Hadamard manifold, the logarithm map is globally defined, hence
\(
v_k=\log_{p_k}z_k\in T_{p_k}\mathbb M
\)
is well defined. Since $p_k$ is not stationary, Proposition~\ref{pr:ProjProperty}$(ii)$ yields
\begin{equation}\label{eq:descent-dir-wdAstsIP}
\langle \grad f(p_k),v_k\rangle <0.
\end{equation}
Define $\phi:[0,1]\to\mathbb R$ by
$
\phi(\alpha):= f \left(\exp_{p_k}(\alpha\,v_k)\right).
$
Since $f$ is $C^1$ and $\exp$ is smooth, $\phi$ is differentiable and
\(
\phi'(0)=\langle \grad f(p_k),v_k\rangle.
\)
Hence, we obtain
\[
\lim_{\alpha\downarrow 0}\left( \frac{\phi(\alpha)-\phi(0)}{\alpha}-\rho\,\langle \grad f(p_k),v_k\rangle\right)
=(1-\rho)\langle \grad f(p_k),v_k\rangle<0,
\]
where the strict negativity follows from~\eqref{eq:descent-dir-wdAstsIP}.
Then, there exists $\delta>0$ such that for all $\alpha\in(0,\delta]$,
\[
\frac{f(\exp_{p_k}(\alpha v_k))-f(p_k)}{\alpha}
\le \rho\,\langle \grad f(p_k),v_k\rangle.
\]
Now choose $\hat\ell\in\mathbb N$ such that $\beta^{\hat\ell}{\bar\lambda}_k\le \delta$.
Then for every $\ell\ge \hat\ell$ we have $\alpha=\beta^\ell{\bar\lambda}_k\in(0,\delta]$, and therefore
\[
f\big(\exp_{p_k}(\beta^\ell{\bar\lambda}_k\, v_k)\big)
\le f(p_k)+\rho\,\beta^\ell{\bar\lambda}_k\,\langle \grad f(p_k),v_k\rangle.
\]
This shows that the set in~\eqref{eq:jkIP} is nonempty, and hence $\ell_k$ is well defined. 
Since $v_k=\log_{p_k}z_k$, the curve $\gamma(t):=\exp_{p_k}(t v_k)$, $t\in[0,1]$, is the unique minimizing
geodesic from $p_k$ to $z_k$, and $\gamma(\lambda_k)=p_{k+1}$.
Because $p_k\in C$, $z_k\in C$, and $C$ is convex, we have $\gamma([0,1])\subset C$, hence
$p_{k+1}\in C$. Therefore, $p_{k+1}$ is well defined and belongs to $C$.
\end{proof}

From this point on, we assume that the algorithm does not stop, that is, \(z_k\neq p_k\) for all \(k\in\mathbb N\). By Proposition~\ref{pr:ProjProperty}\textup{$(i)$}, this implies that \(p_k\) is not a stationary point of~\eqref{eq:OptP} for any \(k\). Consequently, using~\eqref{eq:cpIP} and Proposition~\ref{pr:ProjProperty}\textup{$(ii)$}, we obtain
\begin{equation}\label{eq:neg-dire}
\langle \grad f(p_k),v_k\rangle<0,
\qquad \forall\, k\in\mathbb N.
\end{equation}
Moreover, since $p_0\in C$, Proposition~\ref{pr:wdAstsIP} ensures that $\ell_k$ is well defined at every iteration and that the
sequence $\{p_k\} \subset C$ generated by Algorithm~\ref{AlgsIP} is infinite.

%%%%%%%%%%%%%%%%%%%%%%%%%%%%%%%%%%%%%%%%%%%%%%%%%%%%%%%%%%%%%%%%%%%%%%%%
\subsubsection{Asymptotic convergence analysis}  \label{sec:AsyConAnal}

In this section, we study the asymptotic behavior of the sequence
\(\{p_k\}\) generated by Algorithm~\ref{AlgsIP}, assuming that the stopping criterion is never satisfied. Our goal is to prove that every accumulation point of \(\{p_k\}\) is a stationary point of problem~\eqref{eq:OptP}. The next lemma establishes the basic asymptotic properties required for the convergence analysis. 

\begin{lemma}\label{lem:desc-mono}
Let $\{p_k\}$ be the sequence generated by Algorithm~\ref{AlgsIP}. Then the sequence of function values $\{f(p_k)\}$ is decreasing and convergent. Moreover,
\begin{equation}\label{eq:fcddIP}
\lim_{k\to+\infty}
\lambda_k\,\langle \grad f(p_k), v_k\rangle
=
0.
\end{equation}
\end{lemma}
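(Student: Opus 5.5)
The plan is to read everything off the Armijo inequality that defines $\ell_k$ in \eqref{eq:jkIP}, using the sign information \eqref{eq:neg-dire} and the standing assumption $f^*>-\infty$.

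\emph{Step 1: strict decrease.} By Proposition~\ref{pr:wdAstsIP}, $\ell_k$ is well defined, so with $\lambda_k=\beta^{\ell_k}\bar\lambda_k$ and $p_{k+1}=\exp_{p_k}(\lambda_k v_k)$ we have
\[
f(p_{k+1})\le f(p_k)+\rho\,\lambda_k\langle \grad f(p_k),v_k\rangle .
\]
Here $\rho>0$ and $\lambda_k>0$, and $\langle \grad f(p_k),v_k\rangle<0$ by \eqref{eq:neg-dire}, since the algorithm never stops. Hence $f(p_{k+1})<f(p_k)$ for every $k\in\mathbb N$, so the sequence $\{f(p_k)\}$ is strictly decreasing.

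\emph{Step 2: convergence of the values.} Every $p_k$ lies in $C$ by Proposition~\ref{pr:wdAstsIP}, so $f(p_k)\ge f^*>-\infty$. A monotone sequence that is bounded below converges; call its limit $\bar f$.

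\emph{Step 3: the limit \eqref{eq:fcddIP}.} Rearranging the Armijo inequality gives
\[
0\le -\lambda_k\langle \grad f(p_k),v_k\rangle\le \frac{1}{\rho}\bigl(f(p_k)-f(p_{k+1})\bigr).
\]
The right-hand side tends to $0$ because $f(p_k)\to\bar f$. By squeezing, $\lambda_k\langle \grad f(p_k),v_k\rangle\to 0$.

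Telescoping the same inequality would in fact give summability of $-\lambda_k\langle \grad f(p_k),v_k\rangle$, which is stronger than what is claimed. None of these steps is a real obstacle. The only points needing care are that the backtracking index $\ell_k$ exists at every iteration, which is Proposition~\ref{pr:wdAstsIP}, and that the values are bounded below, which follows because the iterates stay feasible and $f^*>-\infty$ is assumed.
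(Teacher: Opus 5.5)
Your proposal is correct and follows essentially the same route as the paper. Both arguments get strict decrease from the Armijo inequality at the accepted step together with \eqref{eq:neg-dire}, get convergence from the lower bound on $C$ (the paper cites $\Omega^*\neq\varnothing$, you cite $f^*>-\infty$), and obtain \eqref{eq:fcddIP} by squeezing $0\le -\rho\lambda_k\langle \grad f(p_k),v_k\rangle\le f(p_k)-f(p_{k+1})$.
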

\begin{proof}
By \eqref{eq:pk+1thet} and \eqref{eq:jkIP}, the Armijo condition is satisfied at the accepted step size \(\lambda_k\). Hence,

\begin{equation}\label{eq:arm-acept-dsc}
f(p_{k+1})
=
f\!\left(\exp_{p_k}(\lambda_k v_k)\right)
\le
f(p_k)
+
\rho\,\lambda_k\,\langle \grad f(p_k),v_k\rangle, \qquad \forall k\in\mathbb{N}.
\end{equation}
Since \(p_k\) is assumed to be nonstationary for every \(k\), it follows from \eqref{eq:neg-dire} that
$
\langle \grad f(p_k),v_k\rangle < 0.
$
Therefore, \eqref{eq:arm-acept-dsc} implies that
$
f(p_{k+1}) < f(p_k)$ for all
$k\in\mathbb{N}$,
and hence the sequence \(\{f(p_k)\}\) is monotonically decreasing. Since \(\Omega^\ast \neq \varnothing\), the objective function is bounded below on $C$. Consequently, \(\{f(p_k)\}\) converges. On the other hand, combining \eqref{eq:neg-dire} with \eqref{eq:arm-acept-dsc} yields
\[
0
<
-\rho\,\lambda_k
\langle \grad f(p_k),v_k\rangle
\le
f(p_k)-f(p_{k+1}), \qquad \forall k\in\mathbb{N}.
\]
Taking limits as
\(k\to+\infty\) and using the convergence of \(\{f(p_k)\}\), we obtain
\[
\lim_{k\to+\infty}
\Bigl(
-\rho\,\lambda_k
\langle \grad f(p_k),v_k\rangle
\Bigr)
=0.
\]
Since \(\rho>0\), we conclude that
$
\lim_{k\to+\infty}
\lambda_k
\bigl\langle \grad f(p_k),v_k\bigr\rangle
=0.
$
This completes the proof.
\end{proof}

We are now in a position to establish the main convergence result of this section. The following theorem shows that every accumulation point of the sequence generated by Algorithm~\ref{AlgsIP} satisfies the first-order stationarity condition for problem~\eqref{eq:OptP}.

\begin{theorem}\label{teo.MainIP} 
Assume that \textbf{(A2)} holds. Let $\bar p \in C$ be an accumulation point of the sequence 
$\{p_k\}$ generated by Algorithm~\ref{AlgsIP}. 
Then $\bar p$ is a stationary point of Problem~\eqref{eq:OptP}.
\end{theorem}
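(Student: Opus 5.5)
Let $\{p_{k_j}\}$ be a subsequence with $p_{k_j}\to\bar p$. Since $t_{k_j}\in[t_{\min},t_{\max}]$ and $\bar\lambda_{k_j}\in[\lambda^\star,1]$, we pass to a further subsequence (not relabeled) with $t_{k_j}\to\bar t\in[t_{\min},t_{\max}]$, $\bar\lambda_{k_j}\to\hat\lambda\in[\lambda^\star,1]$ and $\lambda_{k_j}\to\bar\lambda\in[0,1]$. By continuity of $\grad f$, $\exp$, $\log$ and of ${\cal P}_C$ (Lemma~\ref{Lem:nonexp.int.neg}$(ii)$) we get
\[
z_{k_j}\to\bar z:={\cal P}_C\bigl(\exp_{\bar p}(-\bar t\grad f(\bar p))\bigr),\qquad v_{k_j}\to\bar v:=\log_{\bar p}\bar z .
\]
Set $\bar\sigma:=\langle\grad f(\bar p),\bar v\rangle$. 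By \eqref{eq:neg-dire}, $\bar\sigma\le0$. By Proposition~\ref{pr:ProjProperty}$(ii)$ applied with $\alpha=\bar t$, or trivially if $\grad f(\bar p)=0$, it suffices to show $\bar\sigma\ge0$, that is, $\bar\sigma=0$. Lemma~\ref{lem:desc-mono} gives $\lambda_{k_j}\langle\grad f(p_{k_j}),v_{k_j}\rangle\to0$, so $\bar\lambda\,\bar\sigma=0$. We split into two cases.

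\emph{Case $\bar\lambda>0$.} Then $\bar\sigma=0$ directly.

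\emph{Case $\bar\lambda=0$.} This is where (A2) is needed. Since $\bar\lambda_{k_j}\ge\lambda^\star>0$ while $\lambda_{k_j}\to0$, we have $\ell_{k_j}\ge1$ for large $j$. Minimality of $\ell_{k_j}$ then says that the Armijo test fails at $s_j:=\lambda_{k_j}/\beta\in(0,1]$, with $s_j\to0$:
\[
f\bigl(\exp_{p_{k_j}}(s_jv_{k_j})\bigr)-f(p_{k_j})>\rho\,s_j\langle\grad f(p_{k_j}),v_{k_j}\rangle .
\]
We apply the mean value theorem to $\phi_j(s)=f(\exp_{p_{k_j}}(sv_{k_j}))$ on $[0,s_j]$. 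This gives $\tau_j\in(0,s_j)$ with
\[
\phi_j'(\tau_j)=\Bigl\langle \grad f(q_j),\,\tfrac{d}{ds}\exp_{p_{k_j}}(sv_{k_j})\big|_{s=\tau_j}\Bigr\rangle,\qquad q_j:=\exp_{p_{k_j}}(\tau_jv_{k_j}).
\]
Dividing the failed inequality by $s_j$ yields $\phi_j'(\tau_j)>\rho\langle\grad f(p_{k_j}),v_{k_j}\rangle$. As $j\to\infty$ we have $q_j\to\bar p$, and the geodesic velocity $\gamma_j'(\tau_j)=P_{p_{k_j}q_j}v_{k_j}$ tends to $\bar v$. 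This follows from the smooth dependence of $d\exp$ on its base point and argument, since $(p_{k_j},\tau_jv_{k_j})\to(\bar p,0)$ in $T\mathbb M$. Combined with the $C^1$ regularity of $f$, the limit gives $\bar\sigma\ge\rho\bar\sigma$, i.e.\ $(1-\rho)\bar\sigma\ge0$, so $\bar\sigma\ge0$. Together with $\bar\sigma\le0$ this gives $\bar\sigma=0$.

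The main obstacle is this limit in the second case: making rigorous that the difference quotient along varying geodesics converges to $\langle\grad f(\bar p),\bar v\rangle$. I would handle it by working in $T\mathbb M$. The map $(p,v,s)\mapsto \frac{d}{ds}f(\exp_p(sv))$ is continuous, being a composition of the continuous map $\grad f$ with the smooth exponential map and its differential. Since $v_{k_j}$ stays bounded, evaluating this map along the convergent sequence $(p_{k_j},v_{k_j},\tau_j)\to(\bar p,\bar v,0)$ gives the limit. Note that (A2) is essential: it rules out $\bar\lambda_{k}\to0$, which would leave no failed trial to exploit.
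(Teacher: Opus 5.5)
Your proof is correct. It shares the paper's overall structure: the subsequence extraction, the split according to whether $\lim_j\lambda_{k_j}$ is positive or zero, and the use of Lemma~\ref{lem:desc-mono} in the positive case. The genuine difference is in the case $\lim_j\lambda_{k_j}=0$.

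The paper argues by a double limit. Since $\bar\lambda_{k_j}\to\hat\lambda\ge\lambda^\star>0$, it gets $\ell_{k_j}\to+\infty$, so for each \emph{fixed} $m$ the Armijo test fails at $\beta^m\bar\lambda_{k_j}$ for all large $j$. Letting $j\to+\infty$ at the fixed positive step $\beta^m\hat\lambda$ uses only continuity of $(p,v)\mapsto f(\exp_p v)$. Letting $m\to+\infty$ then uses only differentiability of $f$ along the single geodesic $s\mapsto\exp_{\bar p}(s\log_{\bar p}\bar z)$.

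You instead take the last failed trial $s_j=\lambda_{k_j}/\beta\to0$ and apply the mean value theorem. This needs joint continuity of $(p,v,s)\mapsto\frac{d}{ds}f(\exp_p(sv))$ on $T\mathbb M\times\mathbb R$, that is, smoothness of $\exp$ on the tangent bundle (its differential) together with continuity of $\grad f$. Both arguments are legitimate because $f$ is $C^1$. The paper's route avoids the differential of $\exp$ and parallel transport entirely. Yours is the classical single-limit argument: it needs only $\ell_{k_j}\ge1$ eventually rather than $\ell_{k_j}\to+\infty$, and it never uses the limit $\hat\lambda$ of the trial steps.

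A minor further difference is how you conclude. You use Proposition~\ref{pr:ProjProperty}$(ii)$ in both cases, via $\langle\grad f(\bar p),\log_{\bar p}\bar z\rangle\ge0$. The paper, in its first case, shows $d(p_{k_j},z_{k_j})\to0$, hence $\bar z=\bar p$, and invokes item $(i)$; the two conclusions are equivalent.
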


\begin{proof}
Let \(\bar p\) be an accumulation point of the sequence \(\{p_k\}\). If
\(\grad f(\bar p)=0\), then \(\bar p\) is stationary by Definition~\ref{def:sta.point}. Then,  consider the case \(\grad f(\bar p)\neq 0\). Choose a subsequence  $\{p_{k_j}\}$  for $\{p_k\}$ such that $\lim_{j\to+\infty}p_{k_j}=\bar p$.
Since $\{t_k\}\subset[t_{\min},t_{\max}]$ by \eqref{eq:coaIP}, 
 $\{ \bar\lambda_k \} \subset [\lambda^{\star}, 1]$  
and $\{\lambda_k\} \subset(0,1]$, it follows that, up to a subsequence (not relabeled), we may assume
\begin{equation*}\label{eq:tmbsub}
\lim_{j\to+\infty} t_{k_j} = \tilde t \in [t_{\min},t_{\max}],
\qquad
\lim_{j\to+\infty} \bar{\lambda}_{k_j} = \hat\lambda \in [\lambda^{\star}, 1],
\qquad
\lim_{j\to+\infty} \lambda_{k_j} = \tilde\lambda \in [0,1].
\end{equation*}
By the continuity of the gradient vector field and the exponential map, we have
$$
\lim_{j\to+\infty}y_{k_j}=\lim_{j\to+\infty}\exp_{p_{k_j}}(-t_{k_j}\grad f(p_{k_j}))=\exp_{\bar p}(-\tilde t\,\grad f(\bar p)).
$$
Furthermore, by the continuity of the logarithm map and the projection map, it follows that
\begin{equation*}\label{eq:zkj-limit-mainIP}
\lim_{j\to+\infty}z_{k_j}=\lim_{j\to+\infty}{\cal P}_C(y_{k_j})= \bar z, \quad \qquad \lim_{j\to+\infty}v_{k_j}=\lim_{j\to+\infty}\log_{p_{k_j}}z_{k_j}= \log_{\bar p}\bar z,
\end{equation*} 
where $\bar z:={\cal P}_C\!\left(\exp_{\bar p}\!\bigl(-\tilde t\,\grad f(\bar p)\bigr)\right)$. We analyse two cases:

\noindent\emph{Case 1: $\tilde\lambda>0$.} From \eqref{eq:fcddIP} and $\lim_{j\to+\infty}\lambda_{k_j}=\tilde\lambda>0$, we  obtain that 
$ \lim_{j\to+\infty}\langle \grad f(p_{k_j}),v_{k_j}\rangle=0$.
Then, Lemma~\ref{le:coslawap} applies with $p=p_{k_j}$, $\alpha=t_{k_j}$ and $v=\grad f(p_{k_j})$ yields 
$$
0 \leq d^2(p_{k_j},z_{k_j})
\le -\,t_{k_j}\big\langle \grad f(p_{k_j}),v_{k_j}\big\rangle,
$$
which implies that $\lim_{j\to+\infty}d(p_{k_j},z_{k_j})= 0$.
Hence, $  \lim_{j\to+\infty}z_{k_j}=\bar p$, and therefore 
\[
\bar p = \bar z
={\cal P}_C\!\left(\exp_{\bar p}\!\bigl(-\tilde t\,\grad f(\bar p)\bigr)\right).
\]
Hence, it follows from Proposition~\ref{pr:ProjProperty}$(i)$ that \( \bar p \) is a stationary point of problem~\eqref{eq:OptP}.

\smallskip
\noindent\emph{Case 2: $\tilde\lambda=0$.}

Since \(\lim_{j\to+\infty}\beta^{\ell_{k_j}}\bar\lambda_{k_j}=\lim_{j\to+\infty}\lambda_{k_j}=0\) and \(\lim_{j\to+\infty}\bar\lambda_{k_j}=\hat\lambda\in(0,1]\), it follows that \(\lim_{j\to+\infty}\ell_{k_j}=+\infty\). Fix an arbitrary \(m\in\mathbb N\). Since \(\lim_{j\to+\infty}\ell_{k_j}=+\infty\), for all sufficiently large \(j\) we have \(m<\ell_{k_j}\). Hence, by the minimality of \(\ell_{k_j}\) in~\eqref{eq:jkIP}, the Armijo condition fails at \(\ell=m\), and therefore
\[
\frac{f\big(\exp_{p_{k_j}}\big(\beta^m\bar\lambda_{k_j}v_{k_j}\big)\big)-f(p_{k_j})}{\beta^m\bar\lambda_{k_j}}>\rho\,\big\langle\grad f(p_{k_j}),v_{k_j}\big\rangle.
\]
For this fixed \(m\), taking the limit as \(j\to+\infty\), and using \(\lim_{j\to+\infty}p_{k_j}=\bar p\), \(\lim_{j\to+\infty}\bar\lambda_{k_j}=\hat\lambda\), and \(\lim_{j\to+\infty}z_{k_j}=\bar z\), together with the continuity of the logarithm map as a map into the tangent bundle, we obtain
\[
\frac{f\big(\exp_{\bar p}\big(\beta^m\hat\lambda\log_{\bar p}\bar z\big)\big)-f(\bar p)}{\beta^m\hat\lambda}\ge\rho\,\big\langle\grad f(\bar p),\log_{\bar p}\bar z\big\rangle.
\]
Now, taking the limit as \(m\to+\infty\), and noting that \(\lim_{m\to+\infty}\beta^m\hat\lambda=0\), differentiability of \(f\) along the geodesic \(s\mapsto\exp_{\bar p}(s\log_{\bar p}\bar z)\) yields \(\big\langle\grad f(\bar p),\log_{\bar p}\bar z\big\rangle\ge\rho\,\big\langle\grad f(\bar p),\log_{\bar p}\bar z\big\rangle\). Since \(0<\rho<1\), it follows that
\begin{equation}\label{eq:nonneg-inner-mainIP}
\big\langle\grad f(\bar p),\log_{\bar p}\bar z\big\rangle\ge0.
\end{equation}
On the other hand, applying Lemma~\ref{le:coslawap} with \(p=\bar p\), \(\alpha=\tilde t\), and \(v=\grad f(\bar p)\), we obtain
\[
\big\langle\grad f(\bar p),\log_{\bar p}\bar z\big\rangle\le-\frac{1}{\tilde t}d^2(\bar p,\bar z)\le0.
\]
Combining this inequality with \eqref{eq:nonneg-inner-mainIP} yields \(d(\bar p,\bar z)=0\), and hence \(\bar z=\bar p\). Therefore, \(\bar p={\cal P}_C\!\left(\exp_{\bar p}\!\bigl(-\tilde t\,\grad f(\bar p)\bigr)\right)\). It then follows from Proposition~\ref{pr:ProjProperty}\textup{$(i)$} that \(\bar p\) is a stationary point of problem~\eqref{eq:OptP}. Since the same conclusion holds in both cases, we conclude that \(\bar p\) is stationary, which completes the proof.

\end{proof}

We next show that, under a compactness assumption, the sequence generated by Algorithm~\ref{AlgsIP} admits accumulation points and that every such accumulation point is stationary.

\begin{corollary}\label{pr:cluster-points-existence}
Suppose that either \(C\) is compact or the lower level set
$$
L_0:=\{p\in C:\ f(p)\le f(p_0)\}
$$
is compact. Then the sequence \(\{p_k\}\) generated by Algorithm~\ref{AlgsIP} admits at least one accumulation point in \(C\). Moreover, if
\textbf{(A2)} holds
then every accumulation point of \(\{p_k\}\) is stationary for problem~\eqref{eq:OptP}. In particular, the sequence admits at least one stationary accumulation point.
\end{corollary}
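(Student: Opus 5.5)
The argument has two parts. First we show that the iterates stay in a compact set, so an accumulation point exists. Then we apply Theorem~\ref{teo.MainIP} to that point.

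\emph{Existence of an accumulation point.} By Proposition~\ref{pr:wdAstsIP}, and because the algorithm is assumed not to stop, the whole sequence \(\{p_k\}\) is well defined and contained in \(C\). By Lemma~\ref{lem:desc-mono}, the values \(\{f(p_k)\}\) are decreasing, so \(f(p_k)\le f(p_0)\) for every \(k\in\mathbb N\). Hence \(p_k\in L_0:=\{p\in C: f(p)\le f(p_0)\}\) for all \(k\). Note that \(L_0\subseteq C\), and \(L_0\) is closed because \(C\) is closed and \(f\) is continuous.

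We now use the hypothesis. If \(C\) is compact, then \(L_0\) is a closed subset of a compact set and is therefore compact. If instead \(L_0\) is assumed compact, there is nothing to check. In either case \(\{p_k\}\) lies in the compact set \(L_0\).

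Since \(\mathbb M\) is a finite-dimensional manifold whose topology is induced by the complete metric \(d\), compactness gives a convergent subsequence \(p_{k_j}\to\bar p\). Closedness of \(L_0\) gives \(\bar p\in L_0\subseteq C\). So at least one accumulation point exists, it lies in \(C\), and it also satisfies \(f(\bar p)\le f(p_0)\).

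\emph{Stationarity.} Assume in addition that \textbf{(A2)} holds. Theorem~\ref{teo.MainIP} then states that every accumulation point of \(\{p_k\}\) is a stationary point of Problem~\eqref{eq:OptP}. Applying this to the accumulation point \(\bar p\) found above gives the final claim: the sequence has at least one stationary accumulation point.

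There is no real obstacle in this argument. The only points needing care are confirming that the iterates remain feasible and satisfy monotone descent, so that they stay in \(L_0\), and noting that \(L_0\) is closed when \(C\) is compact.
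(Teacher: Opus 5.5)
Your proposal is correct and follows essentially the same route as the paper: monotone descent (Lemma~\ref{lem:desc-mono}) keeps \(\{p_k\}\) in \(L_0\subseteq C\), compactness yields an accumulation point in \(C\), and Theorem~\ref{teo.MainIP} gives stationarity under \textbf{(A2)}. The only cosmetic difference is that, when \(C\) is compact, you pass through compactness of the closed set \(L_0\) rather than extracting the subsequence from \(C\) directly, as the paper does.
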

\begin{proof}
By Lemma~\ref{lem:desc-mono}, the sequence $\{f(p_k)\}$ is decreasing, hence
\begin{equation}\label{eq:levelset-inclusion-cluster}
f(p_k)\leq f(p_0)\qquad \forall k\in\mathbb N,
\end{equation}
which implies $p_k\in L_0$ for all $k\in\mathbb{N}$. If $C$ is compact, then $\{p_k\} \subset C$ admits at least one accumulation point in \(C\).
On the other hand, if \(L_0\) is compact, then \eqref{eq:levelset-inclusion-cluster} yields \(\{p_k\}\subset L_0\), and consequently \(\{p_k\}\) admits at least one accumulation point in \(L_0\subseteq C\). This proves the existence of accumulation points. Now assume that \textbf{(A2)} holds,  and let \(\bar p\) be an arbitrary accumulation point of \(\{p_k\}\). Then, Theorem~\ref{teo.MainIP} implies that \(\bar p\) is a stationary point of problem~\eqref{eq:OptP}. Since at least one accumulation point exists, it follows that the sequence \(\{p_k\}\) possesses at least one stationary accumulation point.
\end{proof}

\begin{remark}\label{rk:convex-case-addon}
Assume, in addition, that \(f\) is geodesically convex on \(C\). Then every stationary point of problem~\eqref{eq:OptP} is a global minimizer of \(f\) over \(C\). Consequently, under the assumptions of Theorem~\ref{teo.MainIP}, every accumulation point of \(\{p_k\}\) is a global minimizer. Suppose now that \(f\) is strictly geodesically convex on \(C\). By the standing assumption \(\Omega^*\neq\varnothing\), problem~\eqref{eq:OptP} has a unique minimizer, say \(p^*\). We claim that the initial lower level set \(L_0:=\{p\in C:f(p)\le f(p_0)\}\) is compact. If \(f(p_0)=f(p^*)\), then \(L_0=\{p^*\}\). Otherwise, suppose by contradiction that \(L_0\) is unbounded. Then there exists \(q_j\in L_0\) such that \(r_j:=d(p^*,q_j)\to\infty\). For all sufficiently large \(j\), define \(u_j:=\exp_{p^*}(r_j^{-1}\log_{p^*}q_j)\). Since \(C\) is convex, \(u_j\in C\), and \(d(p^*,u_j)=1\). The unit sphere centered at \(p^*\) is compact in a finite-dimensional Hadamard manifold, so, after passing to a subsequence, \(u_j\to u\in C\) with \(d(p^*,u)=1\). Geodesic convexity gives \(f(u_j)\le(1-r_j^{-1})f(p^*)+r_j^{-1}f(q_j)\le f(p^*)+(f(p_0)-f(p^*))/r_j\). Passing to the limit yields \(f(u)=f(p^*)\), contradicting uniqueness because \(u\neq p^*\). Thus \(L_0\) is bounded. It is also closed, and hence compact by Hopf--Rinow. Since Lemma~\ref{lem:desc-mono} gives \(p_k\in L_0\) for every \(k\), the sequence \(\{p_k\}\) is precompact. Every accumulation point is a global minimizer and therefore equals \(p^*\). Hence the whole sequence converges to \(p^*\). The same conclusion holds, in particular, when \(f\) is strongly geodesically convex on \(C\).
\end{remark}

%%%%%%%%%%%%%%%%%%%%%%%%%%%%%%%%%%%%%%%%%%%%%%%%%%%%%%%%%%%%%%%%%%%%%%%%
\subsubsection{Complexity analysis}   \label{sec:ComplAnal}
In this section, we establish an iteration-complexity bound for the stationarity measure \(d(p_k,z_k)\), where \(z_k\) denotes the projected point computed by Algorithm~\ref{AlgsIP}. This measure is natural in the sense that \(d(p_k,z_k)=0\) if and only if \(p_k\) satisfies the first-order stationarity condition for problem~\eqref{eq:OptP}. Our goal is to derive an upper bound on \[ \min_{0\le k\le N} d(p_k,z_k) \] as a function of the iteration counter \(N\). The analysis relies on two key ingredients. First, we assume that the trial step-sizes are uniformly bounded away from zero, a condition that can be readily enforced in practice. Second, under Assumptions~\textbf{(A1)} and \textbf{(A2)}, and the bounds established in~\eqref{eq:coaIP}, we show that the accepted step-sizes also admit a uniform positive lower bound. Combining this property with the Armijo decrease condition and the projection inequality yields a telescoping argument, from which the standard complexity estimate of order \(O(N^{-1/2})\) follows.

\begin{lemma}\label{Le:MainConvPAstsIP}
Let $\{p_k\}$ be the sequence generated by Algorithm~\ref{AlgsIP}. Suppose that \textbf{(A1)} and~\textbf{(A2)} hold. Then, 
\begin{equation}\label{eq:lb-lambda}
\lambda_k
\ge
\bar{\lambda}
:=
\min\!\left\{
\lambda^{\star},
\frac{2\beta(1-\rho)}{t_{\max}\,L}
\right\},
\qquad \forall\, k\in\mathbb N.
\end{equation}
\end{lemma}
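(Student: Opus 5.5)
The plan is to repeat, for a general trial rule, the argument already carried out in the discussion of rule \ref{it:AdaptLIP}. We fix $k\in\mathbb N$ and split according to whether the backtracking in \eqref{eq:jkIP} performs any reduction.

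\emph{Case $\ell_k=0$.} Then $\lambda_k=\bar\lambda_k\ge\lambda^\star$ by \textbf{(A2)}, so $\lambda_k\ge\bar\lambda$.

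\emph{Case $\ell_k\ge1$.} We consider the previous candidate $s_k^-:=\beta^{\ell_k-1}\bar\lambda_k=\lambda_k/\beta\in(0,1]$. By minimality of $\ell_k$, the Armijo inequality fails at $s_k^-$. To exploit this, we first establish the quadratic model along the feasible geodesic. Since $p_k,z_k\in C$ and $C$ is convex, $\exp_{p_k}(s v_k)\in C$ for $s\in[0,1]$. Lemma~\ref{le:lc}, applied with $q=\exp_{p_k}(sv_k)$ and using $\log_{p_k}q=sv_k$, gives \eqref{eq:lbtrials-quad}. Next, we bound $\|v_k\|^2=d^2(p_k,z_k)\le -t_k\langle\grad f(p_k),v_k\rangle$ by \eqref{eq;snlpf} of Lemma~\ref{le:coslawap} with $\alpha=t_k$ and $v=\grad f(p_k)$. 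Substituting this yields \eqref{eq:lbtrials-combined}.

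Since $\langle\grad f(p_k),v_k\rangle<0$ by \eqref{eq:neg-dire}, every $s\in(0,1]$ with $s\le 2(1-\rho)/(Lt_k)$ satisfies the Armijo condition. Hence failure at $s_k^-$ forces
\[
s_k^->\frac{2(1-\rho)}{L t_k}\ge \frac{2(1-\rho)}{L t_{\max}},
\]
where the last inequality uses \eqref{eq:coaIP}. Multiplying by $\beta$ gives $\lambda_k=\beta s_k^->2\beta(1-\rho)/(Lt_{\max})\ge\bar\lambda$.

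Combining the two cases yields \eqref{eq:lb-lambda}. The only delicate point is making sure Lemma~\ref{le:lc} is legitimately applied at $s_k^-$. This needs $s_k^-\le1$, so that the point lies on the segment from $p_k$ to $z_k$ and hence in $C$, where \textbf{(A1)} provides the Lipschitz bound. This holds because $s_k^-=\beta^{\ell_k-1}\bar\lambda_k\le\bar\lambda_k\le1$. We also need the strict sign of $\langle\grad f(p_k),v_k\rangle$ to compare the right-hand sides, and \eqref{eq:neg-dire} supplies it under the standing nonstopping assumption.
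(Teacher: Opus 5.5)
Your proposal is correct and follows the paper's proof essentially step for step: the split into $\ell_k=0$ and $\ell_k\ge 1$, the failed Armijo test at $\beta^{\ell_k-1}\bar\lambda_k\le 1$ (which keeps the trial point in $C$, so Lemma~\ref{le:lc} applies), the projection bound \eqref{eq;snlpf} to control $\|v_k\|^2$, and the strict negativity \eqref{eq:neg-dire} to conclude $\beta^{\ell_k-1}\bar\lambda_k>2(1-\rho)/(Lt_{\max})$. The only cosmetic difference is that you state the key step as a contrapositive (small trial steps always pass the Armijo test), as in the paper's discussion of rule \ref{it:AdaptLIP}, whereas the lemma's proof combines the failed test and the quadratic bound directly.
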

\begin{proof}
Fix \(k \in \mathbb{N}\). If \(\ell_k=0\), then, by the definition of $\lambda_k$ and  \textbf{(A2)}, we have
$\lambda_k=\bar{\lambda}_k\ge \lambda^{\star}$.
Therefore, $\lambda_k\ge \lambda^{\star}$.  Assume now that $\ell_k\ge 1$. In this case, 
$
\beta^{\ell_k-1}\bar\lambda_k=\lambda_k/\beta\in(0,1],
$
Since $\beta^{\ell_k-1}\bar\lambda_k\le 1$ and $z_k\in C$, the  convexity of $C$ implies $\exp_{p_k}(\beta^{\ell_k-1}\bar\lambda_k v_k) \in C$.
By the minimality of $\ell_k$ in \eqref{eq:jkIP}, the Armijo test fails at $\ell=\ell_k-1$, i.e.,
\begin{equation}\label{eq:armijo-fails-ak-lemma}
f(\exp_{p_k}(\beta^{\ell_k-1}\bar\lambda_k v_k)) > f(p_k)+\rho (\beta^{\ell_k-1}\bar\lambda_k)\langle \grad f(p_k),v_k\rangle.
\end{equation}
On the other hand,  \textbf{(A1)}, together with Lemma~\ref{le:lc} applied to \(q=\exp_{p_k}(\beta^{\ell_k-1}\bar{\lambda}_k v_k)\) and \(p=p_k\), yields
\begin{equation}\label{eq:lc-ak-lemma}
f(\exp_{p_k}(\beta^{\ell_k-1}\bar\lambda_k v_k)) \leq f(p_k)+ \beta^{\ell_k-1}\bar\lambda_k\langle \grad f(p_k),v_k\rangle+\frac{L}{2}(\beta^{\ell_k-1}\bar\lambda_k)^2\,\|v_k\|^2.
\end{equation}
Taking \(p=p_k\), \(\alpha=t_k\), and \(v=\grad f(p_k)\) in~\eqref{eq;snlpf}, we obtain
\begin{equation}\label{eq:proj-vk-bound-lemma}
\|v_k\|^2=d^2(p_k,z_k)\ \le\ -\,t_k\,\langle \grad f(p_k),v_k\rangle .
\end{equation}
By using \eqref{eq:lc-ak-lemma} with \eqref{eq:proj-vk-bound-lemma} one has
\begin{equation}\label{eq:lc-ak-lemma2}
f(\exp_{p_k}(\beta^{\ell_k-1}\bar\lambda_k v_k))  \leq f(p_k)+\beta^{\ell_k-1}\bar\lambda_k\left(1-\frac{L}{2}(\beta^{\ell_k-1}\bar\lambda_k) t_k\right)\langle \grad f(p_k),v_k\rangle.
\end{equation}
Combining \eqref{eq:armijo-fails-ak-lemma} with \eqref{eq:lc-ak-lemma2} and \eqref{eq:neg-dire}, we obtain  $1- L/2(\beta^{\ell_k-1}\bar\lambda_k) t_k < \rho$, which implies that 
\[
\beta^{\ell_k-1}\bar\lambda_k > \frac{2(1-\rho)}{L\,t_k}\ \ge\ \frac{2(1-\rho)}{L\,t_{\max}},
\]
where the last inequality follows from \eqref{eq:coaIP}.
Multiplying by $\beta$ gives
$
\lambda_k= \beta^{\ell_k}\bar\lambda_k  > 2\beta(1-\rho)/(L\,t_{\max}).
$
Therefore, in the case \(\ell_k \ge 1\), we have
$
\lambda_k \ge 2\beta(1-\rho)/(Lt_{\max}).
$
Hence, inequality~\eqref{eq:lb-lambda} holds in both cases, which completes the proof.
\end{proof}
Next, we establish a uniform bound for the partial sums \(\sum_{k=0}^{N} d^2(p_k,z_k)\).
\begin{lemma}\label{lem:telescoping-dist2}
Suppose that \textbf{(A1)} and~\textbf{(A2)} hold.
Let $\{p_k\}$ and $\{z_k\}$ be generated by Algorithm~\ref{AlgsIP}.
Then, for every $N\in\mathbb N$, there holds 
\begin{equation}\label{eq:sum-dist2-bound}
\sum_{k=0}^{N} d^2(p_k,z_k)
\ \le\
\frac{t_{\max} \bigl(f(p_0)-f(p_{N+1})\bigr)}{\rho\,\bar\lambda}
\ \le\
\frac{t_{\max}\bigl(f(p_0)-f^*\bigr)}{\rho\,\bar\lambda}\,,
\end{equation}
where $\bar{\lambda}$ is defined in \eqref{eq:lb-lambda}.
\end{lemma}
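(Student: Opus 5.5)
The argument is a per-iteration estimate followed by a telescoping sum. We combine the Armijo acceptance inequality \eqref{eq:arm-acept-dsc}, the projection inequality \eqref{eq;snlpf}, and the uniform lower bound on accepted step sizes from Lemma~\ref{Le:MainConvPAstsIP}.

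Fix $k\in\mathbb N$. Apply Lemma~\ref{le:coslawap}, inequality \eqref{eq;snlpf}, with $p=p_k$, $v=\grad f(p_k)$ and $\alpha=t_k$. This gives exactly \eqref{eq:proj-vk-bound-lemma}:
\[
d^2(p_k,z_k)\le -t_k\langle \grad f(p_k),v_k\rangle\le -t_{\max}\langle \grad f(p_k),v_k\rangle .
\]
The last step uses $t_k\le t_{\max}$ from \eqref{eq:coaIP} and the fact that $-\langle \grad f(p_k),v_k\rangle>0$, by \eqref{eq:neg-dire}.

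Next we convert the inner product into a decrease of $f$. The Armijo condition at the accepted step, \eqref{eq:arm-acept-dsc}, gives
\[
-\lambda_k\langle \grad f(p_k),v_k\rangle\le \frac{f(p_k)-f(p_{k+1})}{\rho}.
\]
Since $-\langle \grad f(p_k),v_k\rangle>0$, Lemma~\ref{Le:MainConvPAstsIP} (which uses (A1) and (A2)) yields $\lambda_k\ge\bar\lambda>0$. Hence
\[
-\langle \grad f(p_k),v_k\rangle\le \frac{f(p_k)-f(p_{k+1})}{\rho\,\bar\lambda}.
\]
Combining the two displays,
\[
d^2(p_k,z_k)\le \frac{t_{\max}\bigl(f(p_k)-f(p_{k+1})\bigr)}{\rho\,\bar\lambda}.
\]

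Now sum this inequality from $k=0$ to $N$. The right-hand side telescopes to $t_{\max}(f(p_0)-f(p_{N+1}))/(\rho\bar\lambda)$, which is the first inequality in \eqref{eq:sum-dist2-bound}. For the second, note that $p_{N+1}\in C$ (Proposition~\ref{pr:wdAstsIP}), so $f(p_{N+1})\ge f^*$.

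There is no real obstacle here. The only delicate point is the sign bookkeeping: each bound must be multiplied by the positive quantity $-\langle\grad f(p_k),v_k\rangle$, so that replacing $t_k$ by $t_{\max}$ and dividing by $\lambda_k\ge\bar\lambda$ preserves the direction of the inequalities. Both uses are justified by \eqref{eq:neg-dire}.
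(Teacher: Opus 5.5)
Your proposal is correct and takes essentially the same route as the paper. Both combine the Armijo acceptance inequality with the projection bound $d^2(p_k,z_k)\le -t_k\langle \grad f(p_k),v_k\rangle$, use $\lambda_k\ge\bar\lambda$ from Lemma~\ref{Le:MainConvPAstsIP} and $t_k\le t_{\max}$ to obtain $f(p_k)-f(p_{k+1})\ge \rho\,\bar\lambda\, d^2(p_k,z_k)/t_{\max}$, and then telescope, with $f(p_{N+1})\ge f^*$ giving the second inequality; you only bound the two factors in a different order, which makes no difference.
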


\begin{proof}
Fix $k\in\mathbb N$. By \eqref{eq:jkIP} and \eqref{eq:pk+1thet}, we have
$$
f(p_{k+1}) - f(p_k) \le \rho\lambda_k \langle \grad f(p_k),v_k\rangle.
$$
On the other hand, by \eqref{eq;snlpf} with $\alpha =t_k$, $p=p_k$ and $v= \grad f(p_k)$, we have
\[
\langle \grad f(p_k),v_k\rangle\ \le\ -\frac{1}{t_k}\,d^2(p_k,z_k).
\]
By combining the previous two inequalities and applying Lemma~\ref{Le:MainConvPAstsIP} and \eqref{eq:coaIP}, we obtain
\[
f(p_k)-f(p_{k+1})
\ge
\rho\,\frac{\lambda_k}{t_k}\,d^2(p_k,z_k) \geq \rho\,\frac{ \bar{\lambda} }{t_{\max}}\,d^2(p_k,z_k) .
\]
Summing  from $k=0$ to $k=N$ gives
$$
\frac{\rho\,\bar\lambda}{t_{\max}}\sum_{k=0}^{N} d^2(p_k,z_k)
 \le
\sum_{k=0}^{N}\bigl(f(p_k)-f(p_{k+1})\bigr) 
 =
f(p_0)-f(p_{N+1}) \leq f(p_0)- f^*,
$$
which proves the first inequality in \eqref{eq:sum-dist2-bound}. 
\end{proof}

Since the quantity $d(p_k,z_k)$ can be interpreted as a measure of stationarity of $p_k$, the next result provides an iteration-complexity bound for this measure.

\begin{theorem}\label{Teo:FCompP2}
Suppose that \textbf{(A1)} and~\textbf{(A2)} hold. Let $\{p_k\}$ and  $\{z_k\}$  be generated by Algorithm~\ref{AlgsIP}.  Then, for every $N\in\mathbb N$,
\begin{equation}\label{eq:s-dt2-bd-prf}
\min\bigl\{\,d(p_k,z_k): k=0,1,\ldots,N\,\bigr\}
 \le
\sqrt{\frac{t_{\max}\bigl(f(p_0)-f^*\bigr)}{\rho\,\bar \lambda}}\frac{1}{\sqrt{N+1}}.
\end{equation}
Consequently, for any \(\varepsilon>0\), if
\begin{equation}\label{eq:s-dt22-bd-prf}
N  \ge \left\lceil \frac{t_{\max}\bigl(f(p_0)-f^*\bigr) }{\rho\,\bar\lambda\,\varepsilon^2} \right\rceil-1=:N_{\varepsilon},
\end{equation}
then there exists \(k\in\{0,\ldots,N\}\) such that
\(
d(p_k,z_k)\le \varepsilon.
\)
Here, \(\lceil\cdot\rceil\) denotes the ceiling function.
\end{theorem}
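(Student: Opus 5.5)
The bound follows directly from Lemma~\ref{lem:telescoping-dist2}, in the same way that Theorem~\ref{Teo:FCompP} followed from the sufficient-descent inequality of Lemma~\ref{Le:MainConvP}.

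Fix $N\in\mathbb N$. Under \textbf{(A1)} and \textbf{(A2)}, Lemma~\ref{Le:MainConvPAstsIP} guarantees $\bar\lambda>0$, so Lemma~\ref{lem:telescoping-dist2} is available and gives
\[
\sum_{k=0}^{N} d^2(p_k,z_k)\le \frac{t_{\max}\bigl(f(p_0)-f^*\bigr)}{\rho\,\bar\lambda}.
\]
Each of the $N+1$ terms on the left is at least the smallest one, so
\[
(N+1)\min\{d^2(p_k,z_k): 0\le k\le N\}\le \frac{t_{\max}(f(p_0)-f^*)}{\rho\bar\lambda}.
\]
Taking square roots, and using that $\min_k d^2(p_k,z_k)=(\min_k d(p_k,z_k))^2$ because $d\ge0$, yields \eqref{eq:s-dt2-bd-prf}.

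For the $\varepsilon$-complexity statement, set $M:=t_{\max}(f(p_0)-f^*)/(\rho\bar\lambda)\ge0$. If $N\ge N_\varepsilon$, then
\[
N+1\ge \lceil M/\varepsilon^2\rceil\ge M/\varepsilon^2,
\]
so $M/(N+1)\le\varepsilon^2$, and hence the right-hand side of \eqref{eq:s-dt2-bd-prf} is at most $\varepsilon$. The minimum over the finite index set $\{0,\ldots,N\}$ is attained at some $k$, and this $k$ satisfies $d(p_k,z_k)\le\varepsilon$.

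There is one degenerate case to handle: if $M=0$, then $N_\varepsilon=-1$, and every $N\in\mathbb N$ trivially satisfies $N\ge N_\varepsilon$. In that case the bound \eqref{eq:s-dt2-bd-prf} gives $\min_k d(p_k,z_k)=0\le\varepsilon$ directly, so the conclusion still holds.

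There is no real obstacle here. The substantive work, namely the uniform lower bound on the accepted step sizes $\lambda_k$ and the telescoping estimate, is already contained in the preceding lemmas. The only care needed is in the ceiling arithmetic and in the degenerate case $M=0$.
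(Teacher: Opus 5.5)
Your proof is correct and is essentially the paper's argument: both bound $(N+1)\min_{0\le k\le N}d^2(p_k,z_k)$ by the telescoped sum of Lemma~\ref{lem:telescoping-dist2} and take square roots. The only difference is in the $\varepsilon$-statement, which the paper obtains by contradiction at $N=N_\varepsilon$ while you read it off directly from \eqref{eq:s-dt2-bd-prf}. Your separate treatment of the case $f(p_0)=f^*$ is harmless but redundant: your direct argument already covers it. That case also cannot occur, since a non-stopping run has $f(p_0)>f(p_1)\ge f^*$.
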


\begin{proof}
By Lemma~\ref{lem:telescoping-dist2}, for every \(N\in\mathbb{N}\), it holds that
\begin{equation}\label{eq:s-p12-bd-prf}
(N+1)\min_{0\le k\le N} d^2(p_k,z_k)
\le
\sum_{k=0}^{N} d^2(p_k,z_k)
\le
\frac{t_{\max}}{\rho\,\bar\lambda}\bigl(f(p_0)-f^*\bigr).
\end{equation}
Taking square roots immediately yields
\eqref{eq:s-dt2-bd-prf}.

Let $\varepsilon>0$ be arbitrary and suppose, by contradiction, that $d(p_k,z_k)>\varepsilon$ for every $k\in\{0,1,\ldots,N_\varepsilon\}$. Then, by the definition of \(N_\varepsilon\) in~\eqref{eq:s-dt22-bd-prf}, we obtain
\[
\sum_{k=0}^{N_\varepsilon} d^2(p_k,z_k)>(N_\varepsilon+1)\varepsilon^2=\left\lceil\frac{t_{\max}\bigl(f(p_0)-f^*\bigr)}{\rho\,\bar\lambda\,\varepsilon^2}\right\rceil\varepsilon^2\ge\frac{t_{\max}\bigl(f(p_0)-f^*\bigr)}{\rho\,\bar\lambda}.
\] 
This contradicts the second inequality in~\eqref{eq:s-p12-bd-prf} with \(N=N_\varepsilon\). Consequently, there exists \(k\in\{0,1,\ldots,N_\varepsilon\}\) such that $d(p_k,z_k)\le \varepsilon.$ This completes the proof.
\end{proof}

\begin{remark}\label{rk:lambda-lb-implement}
Under Assumptions~\textbf{(A1)}--\textbf{(A2)} and the bounds in~\eqref{eq:coaIP}, Lemma~\ref{Le:MainConvPAstsIP} ensures that
\[
\lambda_k=\beta^{\ell_k}\bar\lambda_k
\geq
\bar{\lambda}
:=
\min\!\left\{
\lambda^{\star},
\frac{2\beta(1-\rho)}{t_{\max}\,L}
\right\},
\qquad
\forall k\in\mathbb{N}.
\]
Consequently, the step sizes are uniformly bounded away from zero. As a result, the summability of the squared step lengths follows, and the \(O(1/\sqrt{N})\) stationarity-complexity bound established in Theorem~\ref{Teo:FCompP2} holds with explicit constants depending on \(\rho\), \(\beta\), \(t_{\max}\), \(L\), \(\lambda^\star\), and the initial optimality gap \(f(p_0)-f^*\).

\end{remark}

Next, we establish a square-summability property for the successive displacements
\(d(p_{k+1},p_k)\). In particular, this result yields a complexity estimate on the number of iterations for which the displacement exceeds a prescribed threshold.

\begin{corollary}\label{cor:asymptotic-regularity}
Under the assumptions of Theorem~\ref{Teo:FCompP2}, the sequence
\(\{d^2(p_{k+1},p_k)\}\) is summable. Consequently,
$\lim_{k\to+\infty} d(p_{k+1},p_k)=0$.
Moreover, for every \(\varepsilon>0\),
\[
\#\Bigl\{\,k\in\mathbb N:\ d(p_{k+1},p_k)\ge \varepsilon\,\Bigr\}
\le
\frac{t_{\max}}{\rho\,\bar\lambda}\,
\frac{f(p_0)-f^*}{\varepsilon^2}.
\]
Here, \(\#(\cdot)\) denotes the cardinality of the corresponding set.
\end{corollary}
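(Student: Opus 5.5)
The plan is to compare the displacement $d(p_{k+1},p_k)$ with the stationarity measure $d(p_k,z_k)$, and then reuse the summability estimate of Lemma~\ref{lem:telescoping-dist2}.

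First, by \eqref{eq:pk+1thet} we have $p_{k+1}=\exp_{p_k}(\lambda_k v_k)$ with $v_k=\log_{p_k}z_k$. Since $\mathbb M$ is Hadamard, the curve $s\mapsto\exp_{p_k}(s v_k)$ is minimizing, so
\[
d(p_{k+1},p_k)=\lambda_k\|v_k\|=\lambda_k\,d(p_k,z_k)\le d(p_k,z_k),
\]
because $\lambda_k\in(0,1]$. Hence $d^2(p_{k+1},p_k)\le d^2(p_k,z_k)$ for every $k$.

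Second, combining this with Lemma~\ref{lem:telescoping-dist2} gives, for every $N$,
\[
\sum_{k=0}^N d^2(p_{k+1},p_k)\le \frac{t_{\max}\bigl(f(p_0)-f^*\bigr)}{\rho\,\bar\lambda}.
\]
The partial sums are nondecreasing and uniformly bounded, so the series converges. Summability forces $d^2(p_{k+1},p_k)\to0$, and therefore $d(p_{k+1},p_k)\to0$.

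Third, for the counting estimate, fix $\varepsilon>0$ and let $K_\varepsilon$ be the set of indices $k$ with $d(p_{k+1},p_k)\ge\varepsilon$. For any finite subset $F\subseteq K_\varepsilon$,
\[
\varepsilon^2\,\#F\le\sum_{k\in F}d^2(p_{k+1},p_k)\le\sum_{k=0}^{\infty}d^2(p_{k+1},p_k)\le \frac{t_{\max}(f(p_0)-f^*)}{\rho\bar\lambda}.
\]
This uniform bound over finite subsets shows that $K_\varepsilon$ is finite and that its cardinality satisfies the stated inequality.

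No step here is a real obstacle. The only point needing care is the identity $d(p_k,\exp_{p_k}(\lambda_k v_k))=\lambda_k\|v_k\|$, which holds because $\exp_{p_k}$ is a global diffeomorphism and radial geodesics from $p_k$ are minimizing. One could instead use the Armijo decrease and the bound $\lambda_k\ge\bar\lambda$ of Lemma~\ref{Le:MainConvPAstsIP}, but the route above gives exactly the stated constant.
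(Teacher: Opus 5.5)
Your proposal is correct and follows the paper's proof essentially step by step. The bound $d(p_{k+1},p_k)=\lambda_k\, d(p_k,z_k)\le d(p_k,z_k)$ combined with Lemma~\ref{lem:telescoping-dist2} gives summability, and the counting estimate follows from $\varepsilon^2\,\#K_\varepsilon\le\sum_{k} d^2(p_{k+1},p_k)$; your use of finite subsets $F\subseteq K_\varepsilon$ makes explicit that $K_\varepsilon$ is finite, which the paper leaves implicit.
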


\begin{proof}
By using \eqref{eq:pk+1thet}, we have $p_{k+1}=\exp_{p_k}(\lambda_k v_k)$ with $\lambda_k\in(0,1]$. Hence, we  conclude that  
\[
d(p_{k+1},p_k)=\|\lambda_k v_k\|=\lambda_k\|v_k\|
=\lambda_k\,d(p_k,z_k)\ \le\ d(p_k,z_k), \qquad \forall k\in \mathbb{N}.
\]
Squaring both sides and summing over \(k=0,\ldots,N\), and then invoking Lemma~\ref{lem:telescoping-dist2}, we obtain
\begin{equation}\label{eq:slity_stps}
\sum_{k=0}^{N} d^2(p_{k+1},p_k)
\le
\sum_{k=0}^{N} d^2(p_k,z_k)
\le
\frac{t_{\max}}{\rho\,\bar\lambda}\bigl(f(p_0)-f^*\bigr),
\qquad \forall\, N\in\mathbb{N}.
\end{equation}
It follows from \eqref{eq:slity_stps} that 
\(\{d^2(p_{k+1},p_k)\}\) is summable. Consequently,   $\lim_{k\to+\infty} d(p_{k+1},p_k)=0$, which proves the first statement.  Now fix \(\varepsilon>0\) and define $ \mathcal K_\varepsilon := \bigl\{ k\in\mathbb N: d(p_{k+1},p_k)\ge\varepsilon \bigr\}$. Then,
\[
\varepsilon^2\,\#\mathcal K_\varepsilon
\ \le\
\sum_{k\in\mathcal K_\varepsilon} d^2(p_{k+1},p_k)
\ \le\
\sum_{k=0}^{+\infty} d^2(p_{k+1},p_k)
\ \le\
\sum_{k=0}^{+\infty} d^2(p_k,z_k)\leq 
\frac{t_{\max}}{\rho\,\bar\lambda}\bigl(f(p_0)-f^*\bigr),
\]
where the last inequality follows from
\eqref{eq:slity_stps} by letting \(N\to\infty\).
This completes the proof.
\end{proof}

%%%%%%%%%%%%%%%%%%%%%%%%%%%%%%%%%%%%%%%%%%%%%%%%%%%%%%%%%%%%%%%%%%%%%%%%%%%%%%%%%%%%%%%%%%%%%%%%%%%%%%%%%%%%%%%%%%%%%%%%%%%%%%%%%%%%%%%%%%%%%%%%%%%%%%%%%%%%%%%%%%%%%%%%%%%%%%%%%%%%%%%%%%%%%%%%%%%%%%%%%%%%%%%%%%%%%%%%%%%%%%%%%%%%%%%%%%%%%%%%%%%%%%%%%%%%%%%%%%%%%%%%%%%%%%%%%%%%%%%%%%%%%%%%%%%%%%%%%%%%%%%%%%%%%%%%%%%%%%%%%%%%

%%%%%%%%%%%%%%%%%%%%%%%%%%%%%%%%%%%%%%%%%%%%%%%%%%%%%%%%%%%%%%%%%%%%%%%%%%%%%%%%%%%%%%%%%%%%%%%%%%%%%%%%%%%%%%%%%%%%%%%%%%%%%%%%%%%%%%%%%%%%%%%%%%%%%%%%%%%%%%%%%%%%%%%%%%%%%%%%%%%%%%%%%%%%%%%%%%%%%%%%%%%%%%%%%%%%%%%%%%%%%%%%%%%%%%

\section{Numerical experiments}\label{sec:nume.exp}

In this section, we present numerical experiments to illustrate the practical behavior of the projected gradient method on Hadamard manifolds. We consider a constrained optimization problem on the SPD manifold $\mathbb{P}^n$. 
%The problem consists of computing the Karcher mean over a lower-bounded spectral set. 
These experiments are not intended to constitute an exhaustive computational study. Instead, they are designed to investigate the numerical behavior of the proposed method  and to examine the influence of the step-size strategies introduced in this work.

We consider two variants of Algorithm~\ref{AlgsIP}, namely ALG2-C and
ALG2-BB, which employ constant step sizes ($t_k=0.01$ for all $k \geq 0$), 
a value selected through preliminary tuning, and Barzilai--Borwein step sizes
(see Section~\ref{step-size}), respectively, in Step~2.
The algorithmic parameters are set to
\[
\rho=10^{-4},\qquad \beta=0.5,\qquad
t_{\min}=10^{-4},\qquad
t_{\max}=10^{2},
\]
and the maximum number of line-search iterations is fixed at \(250\).  {If the Armijo condition was not satisfied within 250 backtracking reductions, the corresponding run was declared unsuccessful.}

Combining each variant with the four step-size strategies St1--St4 yields
the eight implementations
\[
\text{ALG2-C-St1},\ldots,\text{ALG2-C-St4},
\qquad
\text{ALG2-BB-St1},\ldots,\text{ALG2-BB-St4},
\]
where St2 uses \(L_0=1\), St3 uses \(\bar\lambda_0=1\) and \(\underline\lambda=10^{-5}\), and St4 uses \(\theta_0=0.9\) and  \(\underline\lambda=10^{-5}\).

The algorithm was terminated when either \(d(z_k,p_k)\leq 10^{-6}\) or the
maximum iteration limit of 200 was reached. Since the computation of the
Riemannian exponential and logarithm maps in \eqref{eq:exp.log.matrix}, together
with the projections onto the feasible sets considered below, relies on spectral
decompositions, we use the cumulative number of spectral decompositions
performed up to iteration \(k\), denoted by \(\texttt{NumberDec}(k)\), as the
primary measure of computational cost.

All algorithms were implemented in Python~3.14 and executed on a laptop equipped with an AMD Ryzen~5~7520U processor and 8~GB of LPDDR5-5500 RAM, running 64-bit Windows~11 Home. Numerical linear algebra operations were performed using NumPy~2.4.3 and SciPy~1.17.1, and all computations were carried out in double-precision arithmetic.

Let \(A_1,\ldots,A_m\in\mathbb{P}^n\). For our test problem,  we consider  the constrained Karcher
mean problem
\[
\min_{X\in C} f(X)
:=
\frac{1}{2}\sum_{i=1}^m d^2(X,A_i),
\qquad
C:=\left\{X\in\mathbb{P}^n:X\succeq\frac{1}{2}I\right\},
\]
where \(d\) denotes the affine-invariant Riemannian distance defined in
\eqref{eq:spd-distance}. The Riemannian gradient of \(f\) is given by
\[
\operatorname{grad} f(X)
=
-\sum_{i=1}^m\log_X(A_i),
\qquad X\in\mathbb{P}^n.
\]
We consider the following ten problem-size configurations:
\[
\begin{aligned}
(n,m)\in\{&
(10,100),\,
(15,75),\,
(20,50),\,
(30,25),\,
(40,20),\\
&
(100,50),\,
(125,40),\,
(150,30),\,
(175,20),\,
(200,10)
\}.
\end{aligned}
\]
For each pair \((n,m)\), we generate ten feasible initial points of the form
$
    X_0^{(r)} = 0.5 r I$,  $r = 1,\ldots,10,
$
yielding a total of 100 test problem instances for each class of the well- and ill-conditioned test sets. By Proposition~\ref{prop:spd-projection}, the metric projection onto \(C\) is obtained by spectral clipping. Specifically, if \(Y=Q\operatorname{diag}(\lambda_1,\ldots,\lambda_n)Q^\top\in\mathbb P^n\), then
\[
{\cal P}_C(Y)=Q\operatorname{diag}\bigl(\max\{\lambda_1,0.5\},\ldots,\max\{\lambda_n,0.5\}\bigr)Q^\top.
\]
The data matrices \(A_1,\ldots,A_m\) were generated as follows. For each \(i=1,\ldots,m\), we first sample a random orthogonal matrix \(Q_i \in \mathbb{R}^{n \times n}\). Given a prescribed condition number \(\kappa_{\mathrm{data}}\), we set
$
    \lambda_{\min} = \kappa_{\mathrm{data}}^{-1/2},
$ and $
    \lambda_{\max} = \kappa_{\mathrm{data}}^{1/2}$,
so that the geometric mean of the extreme eigenvalues is equal to one.
The remaining \(n-2\) eigenvalues are sampled independently in logarithmic scale over the interval \([\lambda_{\min}, \lambda_{\max}]\). 
Each matrix is then constructed as
\[
    A_i
    =
    Q_i \operatorname{diag}\!\bigl(\lambda_1^{(i)},\ldots,\lambda_n^{(i)}\bigr) Q_i^\top,
    \qquad i=1,\ldots,m,
\]
with \(\lambda_1^{(i)} = \lambda_{\min}\) and \(\lambda_n^{(i)} = \lambda_{\max}\). Consequently, each \(A_i\) is symmetric positive definite with condition number \(\kappa_{\mathrm{data}}\).

We consider two groups of test instances. The first group is formed by well-conditioned data matrices, with $\kappa_{\mathrm{data}}=10^2$, whereas the second group is formed by ill-conditioned data matrices, with $\kappa_{\mathrm{data}}=10^5$. The solvers are compared by means of performance profiles in the sense of Dolan and Mor\'e~\cite{dolan2002benchmarking}.  In the ill-conditioned case, we reset $t_{\max}=10$ for the ALG2-BB variants.

The performance profiles, based on the $\texttt{NumberDec}(k)$ metric and CPU time, for the well-conditioned test set are presented in Figure~\ref{fig:karcher-well-performance-profiles}. According to the $\texttt{NumberDec}(k)$ performance profile, the BB variants clearly outperform the constant-step variants. In particular, ALG2-BB-St1 achieves the largest value of $\pi_s(1)$, approximately $0.95$, indicating that it is the most efficient solver for about $95\%$ of the test instances. The corresponding values of $\pi_s(1)$ for ALG2-BB-St3, ALG2-BB-St2, and ALG2-BB-St4 are approximately $0.78$, $0.66$, and $0.02$, respectively. In contrast, each constant-step variant attains $\pi_s(1)\approx0.02$, highlighting the substantial efficiency gains provided by the Barzilai--Borwein step-size strategy.
Regarding robustness, all algorithms except ALG2-BB-St2 successfully solved every test instance. Consequently, with the exception of ALG2-BB-St2, the methods exhibit essentially identical robustness on the well-conditioned test set.
{The CPU-time performance profiles further confirm the effectiveness of the Barzilai--Borwein step-size strategy. ALG2-BB-St1 is the fastest implementation on approximately $37\%$ of the test instances, followed by ALG2-BB-St3 ($32\%$) and ALG2-BB-St2 ($25\%$). In contrast, none of the constant-step variants is the fastest on more than $2\%$ of the test instances. Therefore, although the execution-time profiles are closer than those based on $\texttt{NumberDec}(k)$, the BB variants consistently exhibit superior computational efficiency.}

\begin{figure}[htbp]
\centering
\begin{subfigure}[b]{0.8\linewidth}
    \centering
    \includegraphics[width=\linewidth, height=0.32\textheight, keepaspectratio]{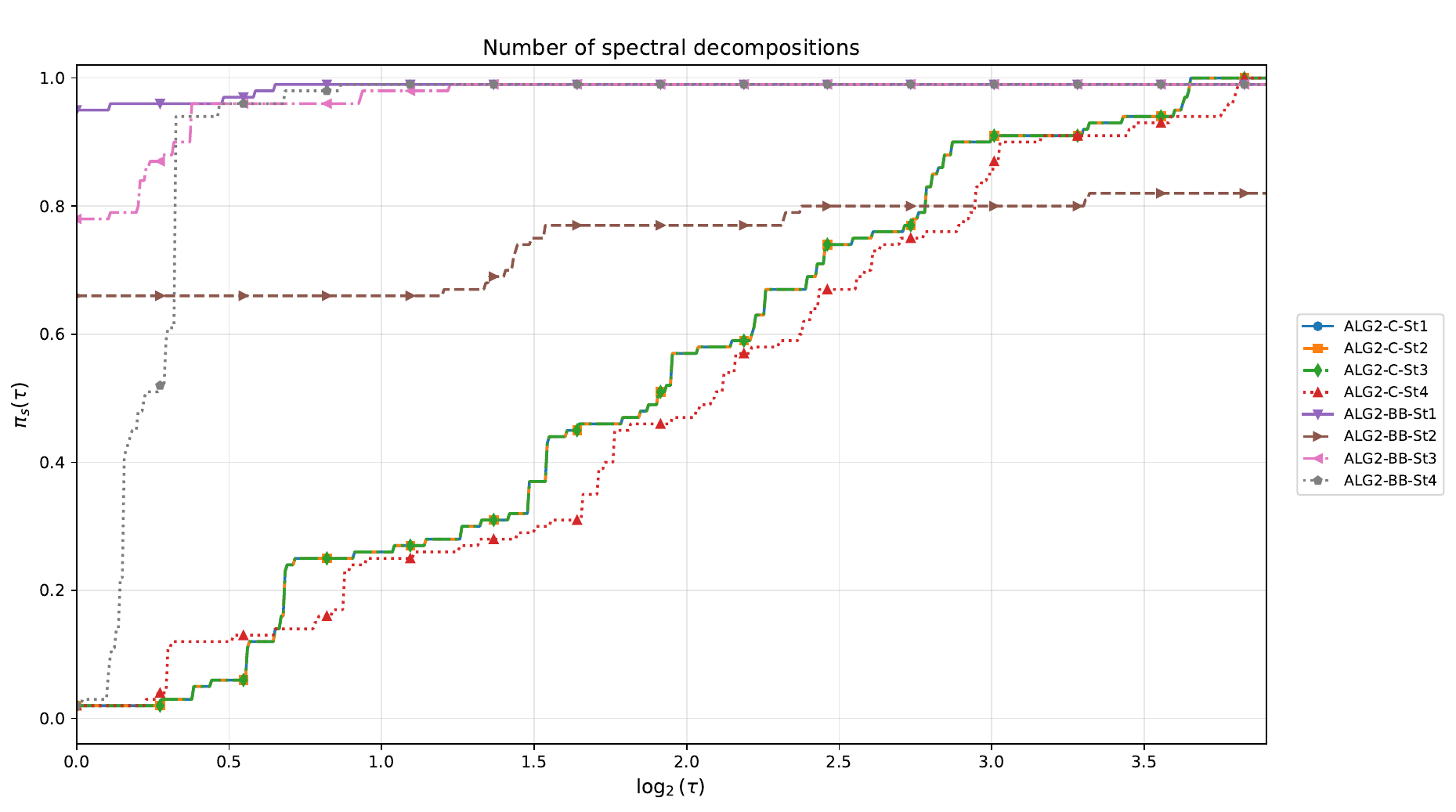}
    \caption{$\texttt{NumberDec}(k)$.}
    \label{fig:karcher-well-nd}
\end{subfigure}

\vspace{0.8em}

\begin{subfigure}[b]{0.8\linewidth}
    \centering
    \includegraphics[width=\linewidth, height=0.32\textheight, keepaspectratio]{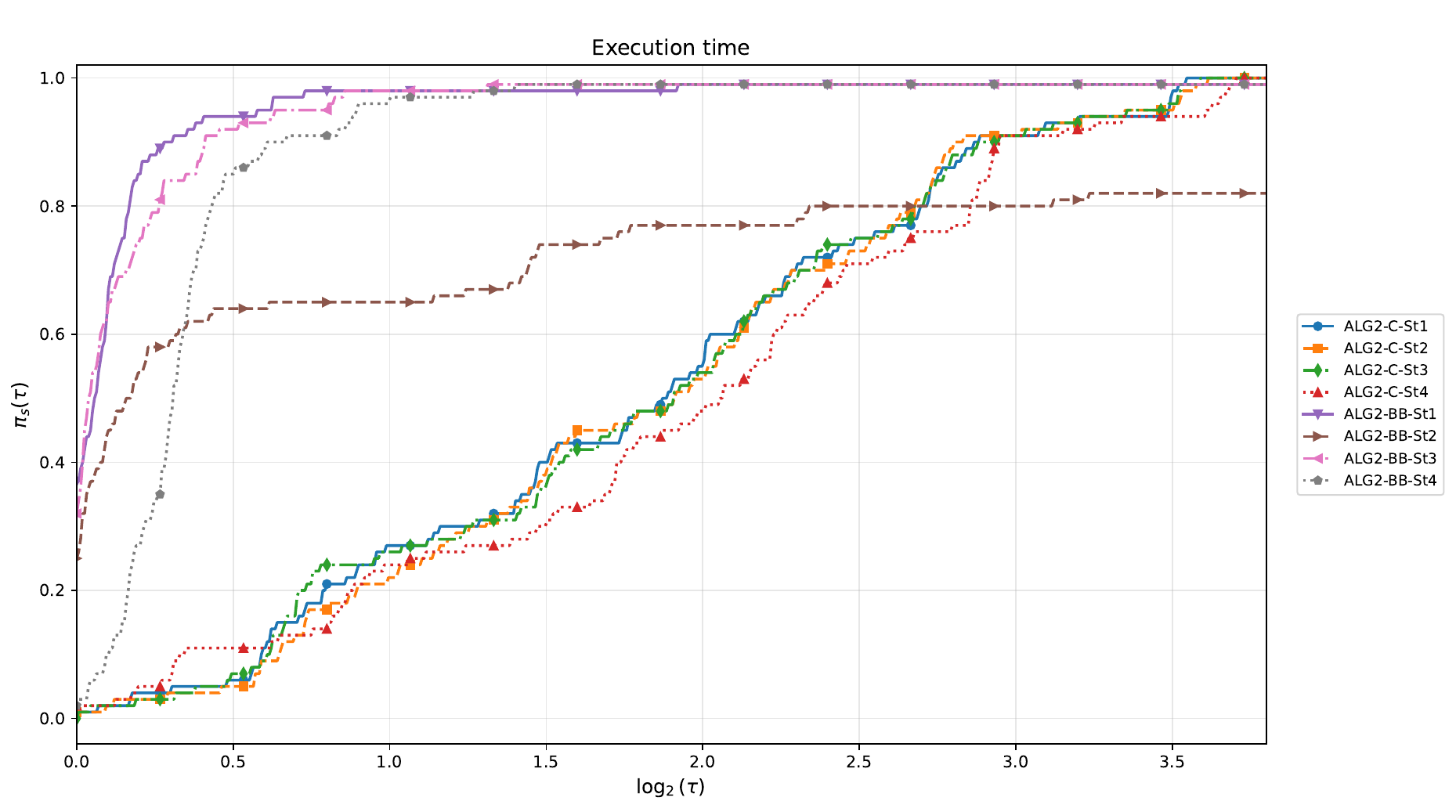}
    \caption{CPU time.}
    \label{fig:karcher-well-time}
\end{subfigure}

\caption{Performance profiles for the constrained Karcher mean problem on the well-conditioned test set.}
\label{fig:karcher-well-performance-profiles}
\end{figure}

Now, the performance profiles, based on the $\texttt{NumberDec}(k)$ metric and CPU time, for the ill-conditioned test set are presented in Figure~\ref{fig:karcher-ill-performance-profiles}. In general, this test set is
 more challenging than the well-conditioned one.  In terms of
$\texttt{NumberDec}(k)$, the BB variants again outperform the
constant-step ones. In particular, ALG2-BB-St1 achieves the largest value of
$\pi_s(1)$, approximately $0.57$, indicating that it is the most efficient solver
for about $57\%$ of the test instances. The corresponding values of $\pi_s(1)$
for ALG2-BB-St3, ALG2-BB-St4, and ALG2-BB-St2 are approximately $0.40$, $0.27$,
and $0.21$, respectively. In contrast, each constant-step variant attains
$\pi_s(1)\approx 0.04$, showing that the Barzilai--Borwein step-size strategy
also provides substantial efficiency gains in the ill-conditioned regime.
The CPU-time performance profiles show a similar behavior. ALG2-BB-St1
is the fastest implementation on approximately $30\%$ of the test instances,
followed by ALG2-BB-St4 ($26\%$), ALG2-BB-St3 ($22\%$), and ALG2-BB-St2
($11\%$). In contrast, none of the constant-step variants is the fastest on more
than $4\%$ of the test instances. 
Regarding robustness, {ALG2-BB-St1} is the most robust variant,
successfully solving $84$ of the $100$ test instances. The variants
{ALG2-BB-St2}, {ALG2-BB-St3}, and {ALG2-BB-St4}
solve $43$, $76$, and $74$ instances, respectively.  On the other hand, ALG2-C-St1 also exhibits remarkable robustness in this setting, solving 76 instances, whereas ALG2-C-St2, ALG2-C-St3, and ALG2-C-St4 solve 59, 65, and 53 instances, respectively.

\begin{figure}[htbp]
\centering
\begin{subfigure}[b]{0.8\linewidth}
    \centering
    \includegraphics[width=\linewidth, height=0.32\textheight, keepaspectratio]{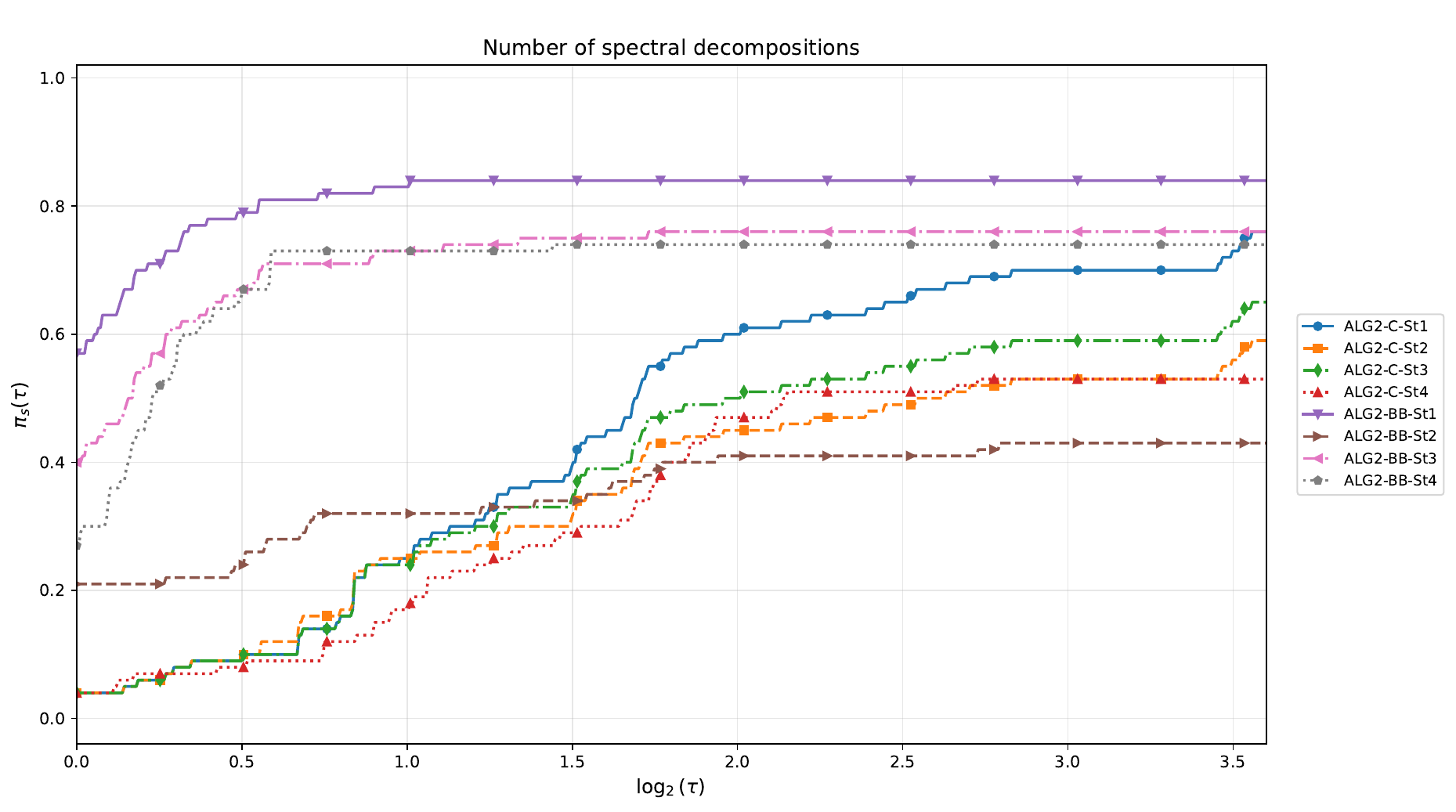}
    \caption{$\texttt{NumberDec}(k)$.}
    \label{fig:karcher-ill-nd}
\end{subfigure}

\vspace{0.8em} 

\begin{subfigure}[b]{0.8\linewidth}
    \centering
    \includegraphics[width=\linewidth, height=0.32\textheight, keepaspectratio]{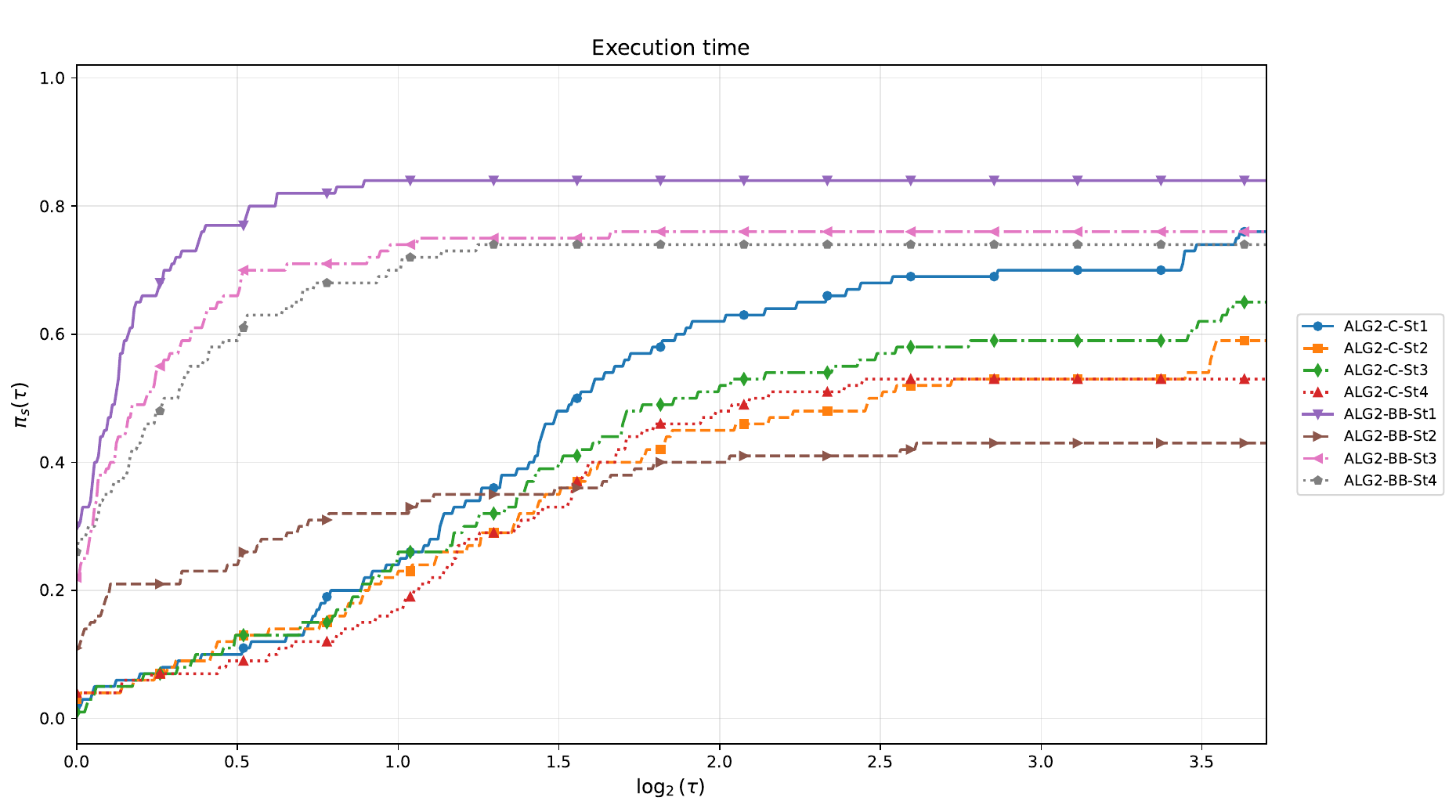}
    \caption{CPU time.}
    \label{fig:karcher-ill-time}
\end{subfigure}

\caption{Performance profiles for the constrained Karcher mean problem on the ill-conditioned test set.}
\label{fig:karcher-ill-performance-profiles}
\end{figure}

In summary, the numerical experiments indicate that the proposed
Barzilai--Borwein step-size strategy consistently improves the practical
performance of Algorithm~2. While all BB variants outperform their
constant-step counterparts in terms of efficiency, ALG2-BB-St1 provides the
best overall performance by achieving the most favorable trade-off between
efficiency and robustness over both well-conditioned and ill-conditioned test
sets.

%---------------------------------------------------------------------------------------------------------
\section{Conclusion}

In this paper, we analyzed two projected gradient schemes for constrained smooth optimization on Hadamard manifolds: a constant-stepsize scheme and a backtracking scheme. For the constant-stepsize method, under Lipschitz continuity of the Riemannian gradient, we established sufficient descent, stationarity of every accumulation point, asymptotic regularity, and an iteration-complexity bound of order \(O(1/\sqrt{N})\) for a natural projection-based stationarity measure. For the backtracking method, we proved well-definedness and stationarity of every accumulation point without requiring Lipschitz continuity of the Riemannian gradient or compactness of the feasible set, provided that the trial line-search stepsizes are uniformly bounded away from zero. Under Lipschitz continuity of the Riemannian gradient, we also obtained an \(O(1/\sqrt{N})\) complexity bound and asymptotic regularity for the backtracking scheme. Compactness assumptions are needed only to guarantee the existence of accumulation points. Numerical experiments on constrained Karcher mean problems over the manifold of symmetric positive definite matrices illustrate the practical behavior of the methods.

The results complement the existing projected- and proximal-gradient literature by extending the direct analysis of projected-direction backtracking schemes from hyperbolic space forms to general Hadamard manifolds, while avoiding compactness or coercivity assumptions in the cluster-point stationarity analysis. Natural directions for future research include establishing convergence of the whole sequence under weaker assumptions, obtaining sharper complexity bounds, and relaxing the uniform lower-bound condition imposed on the trial line-search stepsizes.

\bibliographystyle{plain}
\bibliography{GradProjMet_abbreviated}

\end{document}